\newtheorem{thm}{Theorem}[section]
\newtheorem{prop}[thm]{Proposition}
\newtheorem{lem}[thm]{Lemma}
\newtheorem{cor}[thm]{Corollary}
\newcommand{\cD}{{\mathcal{D}}}
\newcommand{\TT}{\mathbb{T}}
\newcommand{\DD}{\mathbb{D}}
\newcommand{\RR}{\mathbb{R}}
\renewcommand{\hat}{\widehat}
\DeclareMathOperator{\dist}{dist}
\begin{document}

\title{Cantor sets and cyclicity in  weighted Dirichlet spaces }

\author[El--Fallah]{O. El-Fallah$^1$}
\address{D\'{e}partement de Math\'ematiques\\ Universit\'e Mohamed V\\ B.P. 1014 Rabat\\ Morocco }
\email{elfallah@fsr.ac.ma} 
\thanks{{1. Research partially supported by  a grant from Egide Volubilis  (MA09209)}}

\author[Kellay]{K. Kellay$^2$}
\address{CMI\\ LATP\\ Universit\'e de Provence\\39 rue F. Joliot-Curie\\ 13453 Marseille\\Ê France }
\email{kellay@cmi.univ-mrs.fr}
\thanks{{2. Research partially supported by grants from Egide Volubilis  (MA09209) and ANR Dynop}}

\author[Ransford]{T. Ransford$^3$}
\address{D\'epartement de math\'ematiques et de statistique\\ Universit\'e Laval\\ Qu\'ebec (QC)\\ Canada G1V 0A6}
\email{ransford@mat.ulaval.ca}
\thanks{{3. Research partially supported by grants from NSERC (Canada), FQRNT (Qu\'ebec) and the Canada Research Chairs program}}

\begin{abstract}
We treat  the problem of characterizing the cyclic vectors in the weighted Dirichlet spaces,
extending some of our earlier results in the classical Dirichlet space.
The  absence of a Carleson-type formula for weighted Dirichlet integrals necessitates the introduction of new techniques. 
\end{abstract}

\keywords{Dirichlet space, weight, cyclic vector, $\alpha$-capacity, Cantor set}
\subjclass[2000]{30H05, 46E20, 47A15}

\maketitle

\section{Introduction}\label{S:intro}
In this paper we study the weighted Dirichlet spaces $\cD_\alpha~(0\le\alpha\le1)$,  
defined by
$$
\cD_\alpha:=
\Bigl\{f\in\text{hol}(\DD): \cD_\alpha(f):=
\frac{1}{\pi} \int_\DD |f'(z)|^2(1-|z|^2)^\alpha\,dA(z)<\infty\Bigr\}.
$$
Here $\DD$ denotes the open unit disk,  and $dA$ is area measure on $\DD$.
Clearly $\cD_\alpha$ is a  Hilbert space with respect to the norm $\|\cdot\|_\alpha$ given by
$$
\|f\|_{\alpha}^2:= |f(0)|^2+\cD_\alpha(f).
$$
A classical calculation shows that, if $f(z)=\sum_{n\ge0}a_nz^n$,
then
$$ \|f\|_\alpha^2 \asymp\sum_{n\ge0}(n+1)^{1-\alpha}|a_n|^2.$$
Note that $\cD_1=H^2$ is the usual Hardy space, and $\cD_0$ is the classical Dirichlet space (thus our labelling convention follows  \cite{Al} rather than  \cite{BS}).

An {\em invariant subspace} of $\cD _\alpha$
is a closed subspace $M$ of $\cD _\alpha$ such that $ zM\subset M$.
Given $f\in \cD _\alpha$, we denote by $[f]_{\cD _\alpha}$ the smallest invariant
subspace of $\cD _\alpha$ containing $f$, namely
the closure in $\cD_\alpha$ of $\{pf: p \text{ a polynomial}\}$.
We say that $f$ is {\em cyclic} for $\cD _\alpha$ if $[f]_{\cD _\alpha}=\cD _\alpha$.
The survey article \cite{EKR2} gives a brief history of invariant subspaces and cyclic functions in the classical case $\alpha=0$.

Our goal is to characterize the cyclic functions of $\cD_\alpha$.
In order to state our  results, we introduce the notion of $\alpha$-capacity. 
For $\alpha\in[0,1)$, we define the kernel function $k_{\alpha}:\RR^+\to\RR\cup\{\infty\}$ by
$$
k_{\alpha}(t):=
\begin{cases}
1/t^\alpha, & 0<\alpha<1,\\
\log(1/ t),& \alpha =0.
\end{cases}
$$
The \emph{$\alpha$-energy} of a (Borel) probability measure $\mu$ on $\TT$ is defined by
 $$
 I_\alpha(\mu):=
 \iint k_{\alpha}(|\zeta-\zeta'|)\,d\mu(\zeta)\,d\mu(\zeta').
 $$
 A standard calculation gives
$$ 
I_\alpha(\mu) \asymp \sum _{n\ge 0}\frac{|\hat {\mu}(n)|^2}{(1+n)^{1-\alpha}}.
 $$
The \emph{$\alpha$-capacity} of a  Borel subset $E$ of $\TT$ is defined by
 $$
 C_{\alpha}(E):=1/\inf\{I_\alpha(\mu): \mathcal{P}(E)\},
 $$
where $\mathcal{P}(E)$ denotes the set of all probability measures supported on compact subsets of $E$.  In particular, $C_\alpha(E)>0$ if and only if there exists a probability measure~$\mu$ supported on a compact subset of $E$ and having finite $\alpha$-energy.
If $\alpha=0$, then $C_0$ is the classical logarithmic capacity. 

We recall a result due to Beurling and Salem--Zygmund \cite[\S V, Theorem~3]{Ca2} 
about radial limits of functions in the weighted  Dirichlet spaces.  
If  $f\in \cD_\alpha$, then   $f^*(\zeta):=\lim\limits_{r\to 1-}f(r\zeta) $ exists
for all $\zeta\in\TT$ outside a set of $\alpha$-capacity zero.

The following theorem gives two necessary conditions for cyclicity in $\cD_\alpha$.

\begin{thm}\label{T:nec}
Let $\alpha\in[0,1)$. If $f$ is cyclic in $\cD_\alpha$, then 
\begin{itemize}
\setlength{\itemsep}{0pt}
\item $f$ is an outer function, 
\item $\{\zeta\in\TT:f^*(\zeta)=0\}$ is a set of $\alpha$-capacity zero.
\end{itemize}
\end{thm}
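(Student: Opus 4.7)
The plan is to prove the two conclusions separately, handling the outer-function part by a direct $H^2$ argument and the capacity statement by constructing an explicit bounded linear functional on $\cD_\alpha$ that annihilates $[f]_{\cD_\alpha}$ but not $1$.

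\textbf{Outer factor.} Since $\alpha<1$, the series representation $\|h\|_\alpha^2\asymp\sum(n+1)^{1-\alpha}|a_n|^2$ immediately gives a continuous embedding $\cD_\alpha\hookrightarrow H^2$. If $f$ had a nonconstant inner factor $\theta$, every polynomial multiple of $f$ would lie in the $H^2$-closed subspace $\theta H^2$. Approximating $1$ by polynomial multiples $p_nf$ in $\cD_\alpha$ and passing to $H^2$ would force $1\in\theta H^2$, contradicting the nonconstancy of $\theta$.

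\textbf{Capacity of the zero set.} Suppose for contradiction that $E:=\{\zeta\in\TT:f^*(\zeta)=0\}$ has positive $\alpha$-capacity. Then there exists a probability measure $\mu$ supported on a compact subset of $E$ with $I_\alpha(\mu)<\infty$. Using the moments $\hat\mu(-n)=\int\zeta^n\,d\mu(\zeta)$, define
$$
\Lambda_\mu(h):=\sum_{n\ge 0}a_n\hat\mu(-n),\qquad h(z)=\sum_{n\ge 0}a_n z^n\in\cD_\alpha.
$$
Cauchy--Schwarz together with the two series equivalences stated in the excerpt (for $\|h\|_\alpha^2$ and for $I_\alpha(\mu)$, using that $|\hat\mu(-n)|=|\hat\mu(n)|$ because $\mu$ is real) yields
$$
|\Lambda_\mu(h)|^2\lesssim I_\alpha(\mu)\,\|h\|_\alpha^2,
$$
so $\Lambda_\mu\in\cD_\alpha^{*}$. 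On polynomials, $\Lambda_\mu(h)=\int h\,d\mu$; in particular $\Lambda_\mu(1)=1$.

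\textbf{Annihilation of $[f]_{\cD_\alpha}$.} Because $I_\alpha(\mu)<\infty$, the measure $\mu$ assigns zero mass to every set of $\alpha$-capacity zero, so by the Beurling--Salem--Zygmund theorem radial limits $h^*$ exist $\mu$-a.e.\ for every $h\in\cD_\alpha$, and one identifies $\Lambda_\mu(h)=\int h^*\,d\mu$ by an Abel-mean argument. Then for any polynomial $p$,
$$
\Lambda_\mu(pf)=\int p^*f^*\,d\mu=0,
$$
since $f^*=0$ on $\operatorname{supp}\mu\subset E$. Continuity of $\Lambda_\mu$ propagates this to all of $[f]_{\cD_\alpha}$, whereas $\Lambda_\mu(1)=1$; hence $1\notin[f]_{\cD_\alpha}$ and $f$ is not cyclic.

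The main obstacle I anticipate is rigorously identifying the functional $\Lambda_\mu$, defined via Taylor coefficients, with integration of the boundary values $h^*$ against $\mu$ for a general $h\in\cD_\alpha$. This requires combining the continuity of $\Lambda_\mu$ with a quasi-everywhere convergence result for Abel means controlled by the $\alpha$-energy of $\mu$, and it is the one step that cannot be reduced to a polynomial check.
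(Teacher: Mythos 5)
Your proposal is correct and follows essentially the same route as the paper, which gives no proof of its own but defers to Brown--Shields: the $H^2$-embedding argument for the outer part is \cite[Corollary~1]{BS}, and the annihilating functional built from a finite-$\alpha$-energy measure is exactly the argument of \cite[Theorem~5]{BS} with $k_0$ replaced by $k_\alpha$. The step you flag (identifying $\Lambda_\mu(h)$ with $\int h^*\,d\mu$ via quasi-everywhere convergence of Abel means and a capacitary weak-type maximal inequality) is indeed the only technical point, and it is handled by the standard estimates in \cite{BS} and \cite{Ca2}.
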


The first part is  \cite[Corollary~1]{BS}.
For $\alpha=0$, the second part is  \cite[Theorem~5]{BS},
and for general $\alpha$ the proof is similar, the only difference being that the logarithmic kernel $k_0$ is replaced by $k_\alpha$.
We omit the details.

Our main result is a partial converse to this theorem. 
To state it, we need to define the notion of a generalized Cantor set.

Let $(a_n)_{n\ge0}$ be a positive sequence such that $a_0\le2\pi$ and 
$$
\sup_{n\ge0} \frac{a_{n+1}}{a_n}< \frac{1}{2}.
$$
The \emph{generalized Cantor set} $E$ associated to  $(a_n)$ is constructed as follows.
Start with a closed arc of length $a_0$ on the unit circle $\TT$.
Remove an open arc from the middle,
to leave two closed arcs each of length $a_1$.
Then remove two open arcs from their middles to leave
four closed arcs each of length $a_2$. 
After $n$ steps,  we obtain $E_n$, the union of $2^{n}$ closed arcs  each of length $a_n$.
Finally, the generalized Cantor set is  $E:=\cap_n E_n$.

 \begin{thm}\label{T:suff} 
Let $\alpha\in[0,1)$ and let $f\in\cD_\alpha$. Suppose that:
\begin{itemize}
\setlength{\itemsep}{0pt}
\item $f$ is an outer function,
\item $|f|$ extends continuously to $\overline{\DD}$,
\item $\{\zeta\in\TT:|f(\zeta)|=0\}$ is contained in a generalized Cantor set $E$ of $\alpha$-capacity zero.
\end{itemize}
Then $f$ is cyclic  for $\cD_\alpha$.
\end{thm}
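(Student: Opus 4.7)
The plan is to produce a sequence of bounded outer functions $(g_n)\subset H^\infty\cap A(\DD)\cap\cD_\alpha$ such that $fg_n\to 1$ in $\cD_\alpha$. Because each $g_n$ is continuous on $\overline{\DD}$ and $\cD_\alpha$ is a Hilbert space in which polynomials are dense, the Taylor partial sums of $g_n$ converge to it in $\cD_\alpha$; a standard cross-term estimate using $\|f\|_\infty<\infty$ then yields polynomial approximation $p_k f\to g_n f$ in $\cD_\alpha$, so that $g_nf\in[f]_{\cD_\alpha}$. Passing to the limit in $n$ will place $1\in[f]_{\cD_\alpha}$ and prove cyclicity.

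To construct the $g_n$, I would fix a null sequence $\epsilon_n\downarrow 0$ and take $g_n$ to be the outer function with boundary modulus $|g_n|=1/\max(|f|,\epsilon_n)$ on $\TT$. Since $|f|$ is continuous on $\overline{\DD}$, this modulus is continuous and bounded, and the resulting $g_n$ belongs to $A(\DD)\cap H^\infty$ (the harmonic-conjugate regularity being handled by a smoothing of $\max(|f|,\epsilon_n)$ if necessary). The product $fg_n$ is then outer with $|fg_n|=\min(1,|f|/\epsilon_n)$, so the defect $h_n:=1-fg_n$ lies in $H^\infty$ with boundary modulus $\max(0,1-|f|/\epsilon_n)$, supported in the shrinking neighbourhood $U_n:=\{|f|<\epsilon_n\}$ of the zero set of $f^*$, which in turn is contained in $E$. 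Everything reduces to the estimate
$$
\cD_\alpha(h_n)\longrightarrow 0\qquad(n\to\infty).
$$

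This estimate is the crux of the argument. In the case $\alpha=0$ one would invoke Carleson's formula to rewrite $\cD_0(h_n)$ as a double boundary integral and then bound it, via a classical capacity argument, by the (vanishing) logarithmic capacity of $E$. For $\alpha\in(0,1)$ no such boundary-side formula exists, and this is the main obstacle---precisely the gap that the paper's ``new techniques'' must fill. My proposed substitute works directly in $\DD$. Since $C_\alpha(E)=0$, the $\alpha$-energies of the normalised Lebesgue measures $\mu_n$ on the level sets $E_n$ diverge, and the Fourier-side identity $I_\alpha(\mu)\asymp\sum_{k\ge 0}|\hat\mu(k)|^2/(k+1)^{1-\alpha}$ can be used to produce explicit smooth outer cut-off functions $\psi_n\in\cD_\alpha\cap A(\DD)$ that vanish on a tube of thickness $\sim a_n$ around $E$ and satisfy $\|\psi_n\|_\alpha=o(1)$. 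Choosing $\epsilon_n$ small enough, in terms of the modulus of continuity of $|f|$, that $U_n$ lies inside the $a_n$-tube around $E$, one can pointwise dominate $|h_n|$ by a $|1-\psi_n|$-type quantity on $\TT$; the desired $\cD_\alpha$ bound then follows from a Whitney-type dyadic decomposition of $\DD\setminus E$ adapted to the Cantor scales $a_n$, which transfers the boundary majorisation into a weighted area-integral estimate. This disk-side Whitney transfer, substituting for the missing Carleson formula, is what I expect to be the most delicate step.
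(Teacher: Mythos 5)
Your overall architecture (multiply $f$ by outer correctors $g_n$, show $fg_n\in[f]_{\cD_\alpha}$, pass to the limit) is reasonable, and you correctly identify that everything reduces to an energy estimate for $h_n:=1-fg_n$. But the crux claim $\cD_\alpha(h_n)\to0$ is false for your choice of corrector, and the mechanism you propose for proving it is not a valid inference. Since $\cD_\alpha(h_n)=\cD_\alpha(fg_n)$ and $fg_n$ is the outer function with boundary modulus $\min(1,|f|/\epsilon_n)$, each connected component of $\{0<|f|<\epsilon_n\}$ over which this modulus ramps from $0$ to $1$ contributes an amount of order $\ell^\alpha$ to $\cD_\alpha(fg_n)$, where $\ell$ is the component's length (the derivative of the extension is of size $1/\ell$ on a region of area $\ell^2$ where the weight is of size $\ell^\alpha$). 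Already for $\alpha=0$, $E=\{1\}$ and $f(z)=1-z$, this gives a lower bound independent of $n$, so $\cD_0(h_n)\not\to0$; and for a Cantor set with $2^n$ components at scale $a_n$ the total is of order $2^na_n^\alpha$, which is \emph{unbounded} for many $E$ of $\alpha$-capacity zero (e.g.\ $2^na_n^\alpha\asymp n$ still gives $\sum 2^{-n}a_n^{-\alpha}=\infty$, i.e.\ $C_\alpha(E)=0$ by Theorem~\ref{T:capmeas}). A linear ramp in $|f|$ therefore cannot work even for the weaker goal of bounded norms: the transition profile must be adapted to the capacity. This is precisely what the paper's correctors $w_\delta$ do --- their transition region follows the logarithmic-potential profile $A_\delta-\log\int_t^\pi ds/(s^\alpha\psi(s))$ built from a regularized majorant $\psi$ of $t\mapsto|E_t|$, and the divergence $\int_0^\pi dt/(t^\alpha\psi(t))=\infty$ (equivalent to $C_\alpha(E)=0$) is exactly what keeps the energies \emph{bounded}; one then concludes by weak convergence, not norm convergence.

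Second, your transfer step --- dominate $|h_n|$ pointwise on $\TT$ by a cutoff and deduce a bound on $\cD_\alpha(h_n)$ via a Whitney decomposition --- cannot be carried out as stated, because weighted Dirichlet integrals are not monotone under pointwise domination of boundary moduli; this non-monotonicity is the very reason the subject needs Carleson-type formulas and Aleman's comparison lemma in the first place. (Your auxiliary functions are also internally inconsistent: an outer $\psi_n$ cannot vanish on a tube of positive measure around $E$, and $\|\psi_n\|_\alpha=o(1)$ forces $\psi_n\to0$ rather than $\psi_n\to1$.) The missing tool, which is the paper's actual substitute for Carleson's formula, is Theorem~\ref{T:Carleson}: for an outer function, $\cD_\alpha$ is majorized by a Richter--Sundberg-type double boundary integral against a weight $h(\zeta)=C\,d(\zeta,E)^\alpha$ satisfying the K-set condition \eqref{E:Kset}, which generalized Cantor sets are shown to satisfy in Section~\ref{S:Cantor}. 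Combined with the Korenblum-type fusion argument of Section~\ref{S:Korenblum} (needed because $f$ itself is only assumed to have continuous modulus, so one first passes from $f$ to the outer function with boundary modulus $d(\zeta,E)^4$), these are the ingredients your sketch would have to supply; as written, the proof has a genuine gap at its central step.
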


Functions $f$ satisfying the hypotheses exist in abundance. 
Indeed,  any generalized Cantor set 
is a so-called Carleson set,
and is thus the zero set of some  outer function $f$ such that $f$ and all 
its derivatives extend continuously to $\overline{\DD}$.
Moreover, it is very easy to determine which generalized Cantor sets 
have $\alpha$-capacity zero.
More details will be given in \S\ref{S:Cantor}.

To prove Theorem~\ref{T:suff}, we adopt the following strategy.  
In \S\ref{S:Korenblum}, using a technique due to Korenblum, 
we show that $[f]_{\cD_\alpha}$ 
contains at least those functions $g\in\cD_\alpha$ 
satisfying $|g(z)|\le \dist(z,E)^4$.
 The idea is then to take one simple such $g$,
and gradually transform it into the constant function $1$ while staying inside $[f]_{\cD_\alpha}$, thereby proving that $1\in[f]_{\cD_\alpha}$.
This requires three tools: a general estimate for weighted Dirichlet integrals of outer functions, some properties of generalized Cantor sets, and a regularization theorem. These tools are developed in \S\S\ref{S:Dint},\ref{S:Cantor},\ref{S:reg} respectively, and all the pieces are finally assembled in \S\ref{S:completion}, 
to complete the proof of Theorem~\ref{T:suff}.

Theorem~\ref{T:suff} was established for the classical Dirichlet space, $\alpha = 0$,
in \cite[Corollary~1.2]{EKR1}.  The proof there followed the same general strategy, but in several places  key use was made of a formula of Carleson \cite{Ca1} expressing the Dirichlet integral of  an outer function $f$  in terms of the values of  $|f^*|$ on the unit circle. No analogue of Carleson's formula is known in the case $0<\alpha<1$, 
and one of the main points of this note is to show how this difficulty may 
be overcome.

Throughout the paper, we use the notation $C(x_1,\dots,x_n)$ to denote a constant that depends only on $x_1,\dots,x_n$, where the $x_j$ may be numbers, functions or sets.
The constant may change from one line to the next.

\section{Korenblum's method}\label{S:Korenblum}

Our aim in this section is to prove the following theorem.

\begin{thm}\label{T:Korenblum} 
Let $f\in {\cD_\alpha}$ be an outer function such that 
$|f|$ extends continuously to $\overline{\DD}$, 
and let $F:=\{\zeta\in \TT: |f(\zeta)|=0\}$. 
If $g\in {\cD_\alpha}$ and
$$
|g(z)|\le \dist(z,F)^4 \quad (z\in\DD),
$$ 
then $g\in [f]_{\cD_\alpha}$.
\end{thm}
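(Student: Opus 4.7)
The plan is to follow Korenblum's regularization technique, as used for $\alpha=0$ in \cite{EKR1}. Let $F:=\{\zeta\in\TT:|f(\zeta)|=0\}$. For each $\epsilon>0$, let $f_\epsilon$ be the outer function on $\DD$ with boundary modulus $|f_\epsilon(\zeta)|=\max(|f(\zeta)|,\epsilon)$; the continuity of $|f|$ on $\overline{\DD}$ makes this well-defined, with $f_\epsilon\in H^\infty$, $|f_\epsilon|\ge\epsilon$ on $\overline{\DD}$, and $f/f_\epsilon\in H^\infty$ of sup-norm at most~$1$. Set $h_\epsilon:=g/f_\epsilon\in H^\infty$ and
\[
g_\epsilon \;:=\; h_\epsilon f \;=\; g\cdot(f/f_\epsilon);
\]
both expressions show that $g_\epsilon$ is holomorphic on $\DD$, satisfies $|g_\epsilon|\le|g|$, and coincides with $g$ on $\{|f|\ge\epsilon\}$. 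The strategy is to prove $g_\epsilon\in[f]_{\cD_\alpha}$ for each fixed $\epsilon>0$ and then $g_\epsilon\to g$ in $\cD_\alpha$ as $\epsilon\to 0$; since $[f]_{\cD_\alpha}$ is closed, this yields $g\in[f]_{\cD_\alpha}$.

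\emph{Polynomial approximation of $g_\epsilon$.} To prove $g_\epsilon\in[f]_{\cD_\alpha}$, approximate $h_\epsilon\in H^\infty$ by polynomials $p_n$ (for instance the Ces\`aro means of its Taylor series) converging to $h_\epsilon$ uniformly on $\overline{\DD}$ with $\|p_n\|_\infty$ bounded, and aim for $\cD_\alpha\bigl((p_n-h_\epsilon)f\bigr)\to 0$. By the Leibniz rule this splits into
\[
\int_\DD |p_n-h_\epsilon|^2 |f'|^2 (1-|z|^2)^\alpha\,dA \quad\text{and}\quad \int_\DD |p_n'-h_\epsilon'|^2 |f|^2 (1-|z|^2)^\alpha\,dA.
\]
The first tends to $0$ by dominated convergence, since $|p_n-h_\epsilon|^2\to 0$ pointwise and is dominated by a constant against the integrable density $|f'|^2(1-|z|^2)^\alpha$. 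For the second, the identity $h_\epsilon'=(g'-h_\epsilon f_\epsilon')/f_\epsilon$ gives $|h_\epsilon' f|\le |g'|+\|h_\epsilon\|_\infty|f_\epsilon'|$, so once we know $f_\epsilon\in\cD_\alpha$ (which we expect from the general outer-function estimates developed in \S\ref{S:Dint}), the integrand is bounded uniformly in $n$ by an $L^1((1-|z|^2)^\alpha dA)$ function, and dominated convergence again applies.

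\emph{Passage to the limit and main obstacle.} Pointwise, $f/f_\epsilon\to 1$ on $\{|f|>0\}$ while $g\equiv 0$ on $F$, so $g_\epsilon\to g$ pointwise on $\DD$. The hypothesis $|g(z)|\le\dist(z,F)^4$, combined with Cauchy's estimate on a disk of radius $(1-|z|)/2\le\dist(z,F)/2$, yields $|g'(z)|\le C\,\dist(z,F)^3$ and analogous bounds for the derivatives of $g\cdot(f/f_\epsilon)$, producing an integrable majorant independent of $\epsilon$; dominated convergence then upgrades pointwise convergence to convergence in $\cD_\alpha$. The delicate step throughout is controlling $(g/f_\epsilon)'$ near $F$, where the factor $1/f_\epsilon$ can be as large as $1/\epsilon$ and $f_\epsilon'$ may blow up; the exponent $4$ in the hypothesis is calibrated precisely so that $|g|$ and $|g'|$ vanish fast enough on $F$ to absorb both the weight $(1-|z|^2)^\alpha$ and this derivative blow-up, uniformly in $n$ and $\epsilon$, for every $\alpha\in[0,1)$. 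This is the point where a Carleson-type formula would have offered a shortcut in the case $\alpha=0$ and its absence must be compensated for by the pointwise Cauchy estimates.
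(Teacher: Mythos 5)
Your overall architecture (truncate the outer part at level $\epsilon$, show each truncation lies in $[f]_{\cD_\alpha}$, pass to the limit) is a reasonable alternative to the paper's component-by-component Korenblum argument, but the proposal leaves the genuinely hard step unproved, and several of the supporting estimates are wrong as stated. The central issue is your claim that the exponent $4$ in $|g(z)|\le\dist(z,F)^4$ is ``calibrated precisely'' to absorb the blow-up of $(f/f_\epsilon)'$ near $F$: this is precisely the content that must be proved, and it is not automatic. The only easy bound on the logarithmic derivative of an outer factor $f_\Gamma$ is $|f_\Gamma'(z)|\le C\dist(z,\Gamma)^{-2}$ (the paper's inequality \eqref{E:Gamma}), where $\Gamma$ is the set on the circle carrying the boundary data; for your $f_\epsilon$ the relevant set is the level set $\{|f|<\epsilon\}$, and $z$ can be arbitrarily close to it while $\dist(z,F)$ stays bounded below, so the hypothesis on $g$ gives no cancellation there. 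The paper's Lemma~\ref{L:Korenblum} converts $\dist(z,\Gamma)^{-2}$ into $\dist(z,\partial\Gamma)^{-4}$ by a nontrivial decomposition of $\DD$ into the region $G=\{\dist(z,\Gamma)\ge\dist(z,\Gamma^c)^2\}$ and its complement, and even then the singularity sits at $\partial\Gamma$, not at $F$ -- which is why the paper must shrink one complementary arc at a time and multiply by the factor $\phi_{\rho,\epsilon}$ vanishing to order $4$ exactly at the two offending boundary points. Your level set $\{|f|=\epsilon\}$ has no such structure, and $|g|\le\dist(z,F)^4$ does not imply $|g|\le\dist(z,\partial\{|f|<\epsilon\})^4$. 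Without an analogue of Lemma~\ref{L:Korenblum} adapted to your truncation, neither the claim $g_\epsilon\in\cD_\alpha$ (uniformly in $\epsilon$) nor the convergence $g_\epsilon\to g$ in $\cD_\alpha$ is established.

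Two further concrete problems. First, the Cauchy estimate does not give $|g'(z)|\le C\dist(z,F)^3$: the largest disk about $z$ contained in $\DD$ has radius $1-|z|$, which can be far smaller than $\dist(z,F)$, so the estimate you actually get is $|g'(z)|\le C\dist(z,F)^4/(1-|z|)$. Second, the polynomial approximation step is shaky: Ces\`aro means of a general $H^\infty$ function such as $h_\epsilon=g/f_\epsilon$ do \emph{not} converge uniformly on $\overline{\DD}$ (that requires membership in the disk algebra, and $g$ is merely in $\cD_\alpha$), and your dominated convergence argument for the term $\int|p_n'-h_\epsilon'|^2|f|^2(1-|z|^2)^\alpha\,dA$ supplies no majorant for $|p_n'|^2$ that is uniform in $n$. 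This sub-step is best replaced by a citation of Aleman's division lemma \cite[Lemma~3.1]{Al}, exactly as the paper does: since $|g_\epsilon|\le\epsilon^{-1}\|g\|_\infty\,|f|$ on $\TT$, Aleman's result yields $g_\epsilon\in[f]_{\cD_\alpha}$ \emph{provided} one first knows $g_\epsilon\in\cD_\alpha$ -- which returns you to the unproved uniform derivative estimate. Finally, note that the paper never proves norm convergence at the last stage; it only needs uniform boundedness of the $\cD_\alpha$-norms plus pointwise convergence, and then extracts a weak limit inside the closed subspace. Adopting that weaker mode of convergence would simplify your limiting argument, but it still requires the uniform norm bound that is the missing heart of the proof.
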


This theorem is a $\cD_\alpha$-analogue of \cite[Theorem~3.1]{EKR1}, 
which was proved using a technique of Korenblum.
We shall use the same basic technique here. However, the proof in \cite{EKR1} proceeded via a so-called fusion lemma, which, being based on Carleson's formula for the Dirichlet integral, is no longer available to us here. 
Its place is taken by Corollary~\ref{C:Korenblum} below.

We need to introduce some notation. 
Given an outer function $f$ and a Borel subset $\Gamma$ of $\TT$, we define 
$$
f_\Gamma (z):=\exp\Bigl( \frac{1}{2\pi}\int_{\Gamma}
\frac{\zeta+z}{\zeta-z}\log |f^*(\zeta)|\,|d\zeta|\Bigr) \quad(z\in\DD).
$$
We write $\partial\Gamma$ and $\Gamma^c$ for the boundary and complement of $\Gamma$ in $\TT$ respectively.

\begin{lem}\label{L:Korenblum} 
Let  $f$ be a bounded outer function. For every Borel set $\Gamma\subset\TT$,
$$
|f_\Gamma'(z)|\le C(f)(|f'(z)|+\dist(z,\partial\Gamma)^{-4}) \quad(z\in\DD).
$$
\end{lem}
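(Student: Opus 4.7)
The plan hinges on the identity $f_\Gamma'(z) = f_\Gamma(z)\,G'(z)$, where $G(z):=\log f_\Gamma(z)$ is the integrand exponent in the definition of $f_\Gamma$, together with the factorization $f = f_\Gamma f_{\Gamma^c}$ (valid up to a unimodular constant since $f$ is outer). Writing $\log f = G + H$ with $H(z):=\log f_{\Gamma^c}(z)$ yields $G'(z) = f'(z)/f(z) - H'(z)$, and hence
\[
 f_\Gamma'(z) = \frac{f'(z)}{f_{\Gamma^c}(z)} - f_\Gamma(z)\,H'(z).
\]
Since $|f_\Gamma^*|\le\max(|f^*|,1)\le M:=\max(\|f\|_\infty,1)$, the maximum principle gives $|f_\Gamma|\le M$ on $\DD$, so the prefactor $|f_\Gamma(z)|$ is harmless.

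The first step is to estimate the \emph{geometric} piece $|f_\Gamma(z)H'(z)|\le M|H'(z)|$ by $C(f)\,d^{-2}$, where $d:=\dist(z,\partial\Gamma)$. Using the explicit representation
\[
 H'(z) = \frac{1}{\pi}\int_{\Gamma^c}\frac{\zeta\log|f^*(\zeta)|}{(\zeta-z)^2}\,|d\zeta|,
\]
I would split $\Gamma^c$ into its portions within, and beyond, distance $d/2$ of $z$. Any point of $\Gamma^c$ lying within distance $d$ of the radial projection $z/|z|$ must remain on the $\Gamma^c$-side of $\partial\Gamma$, so the near portion is either empty or an arc of length $O(d)$; together with $\log|f^*|\in L^1(\TT)$ this yields a contribution of order $d^{-2}$. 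On the far portion $|\zeta-z|\ge d/2$, the kernel is itself $O(d^{-2})$ and another $L^1$-estimate finishes the job.

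The genuine obstacle is the first piece $|f'(z)/f_{\Gamma^c}(z)|$. The natural bound $|f_{\Gamma^c}(z)|\ge|f(z)|/M$ (from $|f_\Gamma|\le M$) only yields $|f'/f_{\Gamma^c}|\le M|f'/f|$, which is unbounded near the zeros of $f$ on $\TT$; the two pieces of the identity therefore cannot be estimated separately. One must exploit the \emph{cancellation} between $f'(z)/f_{\Gamma^c}(z)$ and $f_\Gamma(z)H'(z)$: near a zero of $f$, both terms blow up with matching singular parts, and the genuine derivative $f_\Gamma'(z)$ absorbs only the residual. My plan is to implement this cancellation by adding and subtracting a suitable ``principal part'' of $H'$ designed to reproduce $f'/f$ modulo a controlled remainder, so that what is left has an honest factor of $f'(z)$ together with an error governed by the $d^{-2}$ estimates above applied a second time. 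The appearance of the specific power $d^{-4}$ in the conclusion then reflects the composition of two $d^{-2}$ estimates, and making this cancellation quantitative is the technical heart of the proof.
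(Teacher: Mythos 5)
Your first step is already unavailable: the claimed bound $|f_\Gamma(z)H'(z)|\le C(f)\dist(z,\partial\Gamma)^{-2}$ is false in general. In your ``near portion'' estimate the kernel $|\zeta-z|^{-2}$ is only controlled by $(1-|z|)^{-2}$, and $1-|z|$ can be arbitrarily small compared with $d=\dist(z,\partial\Gamma)$ when $z$ approaches an interior point of $\Gamma^c$; mere integrability of $\log|f^*|$ gives no gain there. Concretely, let $\Gamma^c$ be a fixed arc $J$ centred at $\zeta_0$ and let $\log|f^*(\zeta)|=-|\zeta-\zeta_0|^{-1/2}$ near $\zeta_0$ (still in $L^1$, so $f$ is a bounded outer function). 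As $z=r\zeta_0\to\zeta_0$ radially, $d$ stays bounded below and $|f_\Gamma(z)|\to1$, while $\operatorname{Re}H(r\zeta_0)\le -c(1-r)^{-1/2}\to-\infty$, so $|H'|$ is unbounded along the radius; hence $|f_\Gamma H'|$ is not $O(d^{-2})$. Since $|f_\Gamma'|$ itself stays bounded there (by the crude estimate $|f_\Gamma'|\le |f_\Gamma|\,|(\log f_\Gamma)'|\le C(f)\dist(z,\Gamma)^{-2}$), \emph{both} terms of your identity blow up, which is incompatible with your plan of bounding the second term outright and reserving the cancellation for the first. As for that cancellation --- the ``principal part of $H'$ reproducing $f'/f$ modulo a controlled remainder'' --- it is announced but never constructed, and it is precisely the content of the lemma; so the proposal has a genuine gap at its heart, in addition to the incorrect preliminary estimate.

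The paper's proof needs no cancellation. After normalizing $\|f\|_\infty\le1$ (so $\log|f^*|\le0$), one has $|f_\Lambda'(z)|\le 2\log(1/|f(0)|)/\dist(z,\Lambda)^2$ for every Borel $\Lambda\subset\TT$ --- note the distance is to $\Lambda$ itself, not to $\partial\Lambda$. One then splits $\DD$ according to whether $\dist(z,\Gamma)\ge\dist(z,\Gamma^c)^2$ or not. In the first case $\dist(z,\Gamma)\gtrsim\dist(z,\partial\Gamma)^2$, and the displayed bound with $\Lambda=\Gamma$ already yields $C\dist(z,\partial\Gamma)^{-4}$, with no $|f'|$ term at all. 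In the second case $\dist(z,\Gamma^c)^2>\dist(z,\Gamma)\ge(1-|z|^2)/2$, so the Poisson kernel is at most $2$ on $\Gamma^c$ and therefore $|f_{\Gamma^c}(z)|\ge|f(0)|^2$; the quotient rule for $f_\Gamma=f/f_{\Gamma^c}$ then gives $|f_\Gamma'|\le|f'|/|f(0)|^2+C\dist(z,\Gamma^c)^{-2}$ directly, every denominator being harmlessly bounded below. The exponent $4$ thus arises from the squaring built into the dichotomy, not from composing two $d^{-2}$ estimates. If you wish to salvage your route, the missing ingredient is exactly a lower bound on $|f_{\Gamma^c}(z)|$ on the region where the $\dist(z,\Gamma)^{-2}$ bound is insufficient --- which is the paper's second case.
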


\begin{proof}
Without loss of generality, we may suppose that $\|f\|_\infty\le1$.
Note that then $\|f_\Gamma\|_\infty\le1$ for all $\Gamma$.
Also, obviously, $\log|f^*|\le0$ a.e.\ on $\TT$, 
which will help simplify some of the calculations below.

We begin by observing that
$$
\frac{f_\Gamma'(z)}{f_\Gamma(z)}
=\frac{1}{2\pi}\int_\Gamma \frac{2\zeta}{(\zeta-z)^2}\log|f^*(\zeta)|\,|d\zeta| 
\quad(z\in\DD),
$$
from which it follows easily that
\begin{equation}\label{E:Gamma}
|f_\Gamma'(z)|\le \frac{2\log(1/|f(0)|)}{\dist(z,\Gamma)^2} \quad(z\in\DD).
\end{equation}

Our aim now is to prove a similar inequality, 
but with $\partial\Gamma$ in place of $\Gamma$. 
Set $G:=\{z\in\DD:\dist(z,\Gamma)\ge\dist(z,\Gamma^c)^2\}$. 
Clearly $\dist(z,\Gamma)\ge\dist(z,\partial\Gamma)^2$ for all $z\in G$,
so \eqref{E:Gamma} implies 
\begin{equation}\label{E:zinG}
|f_\Gamma'(z)|\le \frac{2\log(1/|f(0)|)}{\dist(z,\partial\Gamma)^4} \quad(z\in G).
\end{equation}
Now suppose that $z\in\DD\setminus G$.
Then $\dist(z,\Gamma^c)^2>\dist(z,\Gamma)\ge (1-|z|^2)/2$, and hence
$$
|f_{\Gamma^c}(z)|
=\exp\Bigl(\frac{1}{2\pi}\int_{\Gamma^c}\frac{1-|z|^2}{|\zeta-z|^2}\log|f^*(\zeta)|\,|d\zeta|\Bigr)
\ge|f(0)|^2.
$$
Since obviously $f_\Gamma=f/f_{\Gamma^c}$, it follows that, for all $z\in\DD\setminus G$,
$$
|f_\Gamma'(z)|
\le \frac{|f'(z)|}{|f_{\Gamma^c}(z)|}+\frac{|f(z)|}{|f_{\Gamma^c}(z)|^2}|f_{\Gamma^c}'(z)|
\le \frac{|f'(z)|}{|f(0)|^2}+\frac{1}{|f(0)|^4}\frac{2\log(1/|f(0)|)}{\dist(z,\Gamma^c)^2},
$$
where once again we have used \eqref{E:Gamma}, this time with $\Gamma$ replaced by $\Gamma^c$. Noting that $\dist(z,\Gamma^c)\ge \dist(z,\partial\Gamma)$ for all $z\in\DD\setminus G$, we deduce that
\begin{equation}\label{E:znotinG}
|f_\Gamma'(z)|\le \frac{|f'(z)|}{|f(0)|^2}+\frac{1}{|f(0)|^4}\frac{2\log(1/|f(0)|)}{\dist(z,\partial\Gamma)^2} \quad(z\in\DD\setminus G).
\end{equation}
The inequalities \eqref{E:zinG} and \eqref{E:znotinG} between them give the result.
 \end{proof}

 \begin{cor}\label{C:Korenblum}
 Let $\alpha\in[0,1)$ and $f\in\cD_\alpha\cap H^\infty$ be an outer function.
 Then, for every Borel set $\Gamma\subset\TT$ and every $g\in\cD_\alpha$ satisfying
 $|g(z)|\le\dist(z,\partial\Gamma)^4$, we have
 $$
 \|f_\Gamma g\|_\alpha\le C(\alpha,f)(1+\|g\|_\alpha).
 $$
 \end{cor}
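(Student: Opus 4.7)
The plan is to estimate $\|f_\Gamma g\|_\alpha^2 = |f_\Gamma(0)g(0)|^2 + \cD_\alpha(f_\Gamma g)$ by controlling each piece separately, with Lemma \ref{L:Korenblum} doing the heavy lifting.

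First I would record two elementary observations. By the definition of $f_\Gamma$, its boundary modulus equals $|f^*|$ on $\Gamma$ and $1$ on $\Gamma^c$, so $f_\Gamma$ is a bounded outer function with $\|f_\Gamma\|_\infty \le \max(1, \|f\|_\infty) = C(f)$. Also the hypothesis $|g(z)| \le \dist(z, \partial\Gamma)^4$ forces $\|g\|_\infty \le 2^4$, since $\partial\Gamma \subset \TT$ and $\DD$ has diameter $2$. In particular $|f_\Gamma(0) g(0)| \le C(f)$, which disposes of the point-evaluation term.

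Next I would apply the Leibniz rule $(f_\Gamma g)' = f_\Gamma' g + f_\Gamma g'$ together with $(a+b)^2 \le 2a^2 + 2b^2$, reducing $\cD_\alpha(f_\Gamma g)$ to two weighted integrals. The easy one, $\int_\DD |f_\Gamma|^2 |g'|^2 (1-|z|^2)^\alpha \, dA$, is bounded by $\|f_\Gamma\|_\infty^2 \cD_\alpha(g) \le C(f) \|g\|_\alpha^2$ immediately. For the more delicate term $\int_\DD |f_\Gamma'|^2 |g|^2 (1-|z|^2)^\alpha \, dA$, I would invoke Lemma \ref{L:Korenblum} in squared form, $|f_\Gamma'|^2 \le C(f)(|f'|^2 + \dist(z,\partial\Gamma)^{-8})$. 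The piece involving $|f'|^2$ yields at most $C(f) \|g\|_\infty^2 \cD_\alpha(f) \le C(f)$ by the uniform bound on $g$. The piece involving $\dist(z,\partial\Gamma)^{-8}$ is where the hypothesis on $g$ is used in an essential way: $\dist(z,\partial\Gamma)^{-8} |g(z)|^2 \le 1$ pointwise, so this contribution is dominated by $C(f) \int_\DD (1-|z|^2)^\alpha \, dA = C(\alpha, f)$.

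I do not anticipate any real obstacle: once Lemma \ref{L:Korenblum} is in hand, the corollary is essentially bookkeeping, and the exponent $4$ in the hypothesis on $g$ has been chosen precisely to cancel the exponent $-4$ coming out of that lemma after squaring. Adding the three bounds yields $\|f_\Gamma g\|_\alpha^2 \le C(\alpha, f)(1 + \|g\|_\alpha^2)$, from which the advertised inequality $\|f_\Gamma g\|_\alpha \le C(\alpha, f)(1 + \|g\|_\alpha)$ follows on taking square roots (using $\sqrt{1+t^2} \le 1 + t$).
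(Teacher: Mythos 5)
Your proposal is correct and follows essentially the same route as the paper: the paper's proof is exactly the Leibniz-rule decomposition combined with Lemma~\ref{L:Korenblum}, using the boundedness of $g$ and the cancellation of $\dist(z,\partial\Gamma)^{-4}$ against $|g(z)|\le\dist(z,\partial\Gamma)^4$, arriving at $|(f_\Gamma g)'|\le C(f)(|f'|+1+|g'|)$. Your write-up merely spells out the bookkeeping that the paper leaves as ``the conclusion follows easily.''
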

 
 \begin{proof}
Using Lemma~\ref{L:Korenblum}, we have
 $$
 |(f_\Gamma g)'|\le |f_\Gamma'||g|+|f_\Gamma||g'|\le C(f)(|f'|+1+|g'|).
 $$
 The conclusion follows easily from this.
  \end{proof}

 \begin{proof}[Proof of Theorem~\ref{T:Korenblum}]
 Let $I$ be a connected component of $\TT \setminus F$,
say $I= (e^{ia},e^{ib})$. 
Let $\rho>1$, and define
\begin{align*}
\psi_\rho(z)&:=(z-1)^4/(z-\rho)^4,\\
\phi_\rho(z)&:= \psi_\rho(e^{-ia}z)\psi_\rho(e^{-ib}z).
\end{align*}
The first step is to show that $\phi_\rho f_{\TT\setminus I}\in[f]_{\cD_\alpha}$.
 
Let   $\epsilon >0$ and set $I_\epsilon:=(e^{i(a+\epsilon)},e^{i(b-\epsilon)})$ and
$$
\phi_{\rho,\epsilon}(z):=\psi_\rho(e^{-i(a+\epsilon)}z)\psi_\rho(e^{-i(b-\epsilon)}z).
$$
By Corollary~\ref{C:Korenblum},
\begin{equation}\label{E:phieps}
\| \phi_{\rho,\epsilon} f_{\TT\setminus I_\epsilon}\|_\alpha
\le C(f) \|\phi_{\rho,\epsilon}\|_\alpha \le C(f,\rho).
\end{equation}
Note that  $|f_{\TT\setminus I_\epsilon}|= |f|$ in a neighborhood of   $\TT \setminus I$. 
Since $|f|$ does not vanish inside $I$, it follows that 
$|\phi_{\rho,\epsilon}f_{\TT\setminus I_\epsilon}|/|f|$ is bounded on $\TT$. 
Also $f$ is an outer function. 
Therefore, by a theorem of  Aleman \cite[Lemma~3.1]{Al},
$$
\phi_{\rho ,\epsilon}f_{\TT\setminus I_\epsilon} \in [f]_{\cD_\alpha}.
$$
Using  \eqref{E:phieps}, we see that
$\phi_{\rho,\epsilon} f_{\TT\setminus I_\epsilon}$ converges weakly in $\cD_\alpha$ 
to $\phi_\rho f_{\TT\setminus I}$ as $\epsilon\to 0$. Hence 
$\phi_\rho f_{\TT\setminus I} \in [f]_{\cD_\alpha}$, as claimed.

Next, we multiply by $g$.
As $g\in\cD_\alpha\cap H^\infty$, Aleman's theorem immediately yields
$\phi_\rho f_{\TT\setminus I} g\in [f]_{\cD_\alpha}$.
Using the fact that $|g(z)|\le \dist(z,F)^4$, it is easy to check that $\|\phi_{\rho}g\| _{\alpha}$
remains bounded as $\rho\to1$. By Corollary~\ref{C:Korenblum} again,
 $\|\phi_\rho f_{\TT\setminus I}g\|_{\alpha}$ is uniformly bounded, 
and $\phi_\rho f_{\TT\setminus I}g$ converges weakly to $f_{\TT\setminus I}g$. Hence
 $f_{\TT\setminus I}g\in [f]_{\cD_\alpha}$.

Now let $(I_j)_{j\ge1}$ be  the complete set of components of $\TT\setminus F$,
and set $J_n:=\cup_1^nI_j$. An argument similar to that above gives
$f_{\TT\setminus J_n}g\in[f]_{\cD_\alpha}$ for all~$n$. Moreover,
$\|f_{\TT\setminus J_n}g\|_{\alpha}$ is uniformly bounded. Thus 
$f_{\TT\setminus J_n}g$ converges weakly to~$g$, 
and so finally $g\in[f]_{\cD_\alpha}$.
\end{proof}

\section{Estimates for weighted Dirichlet integrals}\label{S:Dint}
 
The following result will act as a partial substitute for Carleson's formula.
  
\begin{thm}\label{T:Carleson}
Let $\alpha\in[0,1)$, and let $h:\TT\to\RR$ be a positive measurable function such that, for every arc $I\subset\TT$, 
\begin{equation}\label{E:hjensen}
\frac{1}{|I|}\int_I h(\zeta)\,|d\zeta|
\ge |I|^{\alpha}.
\end{equation}
If  $f$ is an outer function, then
$$
\cD_\alpha(f)\le
\frac{1}{\pi}\iint_{\TT^2}
\frac{(|f^*(\zeta)|^2-|f^*(\zeta')|^2)(\log|f^*(\zeta)|-\log|f^*(\zeta')|)}
{|\zeta-\zeta'|^{2}} (h(\zeta)+ h(\zeta'))|d\zeta||d\zeta'|.
$$
\end{thm}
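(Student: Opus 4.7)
The plan is to reduce to a tractable subclass of $f$, recast $\cD_\alpha(f)$ via the Poisson extension $u=\log|f|$, and then use the Jensen-type condition on $h$ to transfer the disk weight $(1-|z|^2)^\alpha$ onto the boundary.

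By density I would first reduce to the case $f,1/f\in H^\infty$, so that $u(z):=\log|f(z)|$ is bounded and harmonic on $\DD$ and coincides with the Poisson extension $P[u^*]$ of $u^*:=\log|f^*|$. The identity $|f'|^2=|f|^2|\nabla u|^2$ gives
\[
\cD_\alpha(f)=\frac{1}{\pi}\int_\DD|f(z)|^2|\nabla u(z)|^2(1-|z|^2)^\alpha\,dA(z).
\]
Writing $P(z,\zeta)$ for the Poisson kernel and using $\int_\TT \nabla_z P(z,\zeta)\,|d\zeta|=0$, the standard symmetrization of the Poisson formula for $\nabla u$ yields
\[
|\nabla u(z)|^2=-\frac{1}{2(2\pi)^2}\iint_{\TT^2}\nabla_z P(z,\zeta)\cdot\nabla_z P(z,\zeta')\bigl(u^*(\zeta)-u^*(\zeta')\bigr)^2|d\zeta||d\zeta'|.
\]
Bounding $-\nabla_z P(z,\zeta)\cdot\nabla_z P(z,\zeta')\le|\nabla_z P(z,\zeta)||\nabla_z P(z,\zeta')|$ and applying Fubini gives
\[
\cD_\alpha(f)\le\frac{1}{2\pi(2\pi)^2}\iint_{\TT^2}\bigl(u^*(\zeta)-u^*(\zeta')\bigr)^2\tilde K(\zeta,\zeta')\,|d\zeta||d\zeta'|,
\]
where $\tilde K(\zeta,\zeta'):=\int_\DD|f(z)|^2(1-|z|^2)^\alpha|\nabla_z P(z,\zeta)||\nabla_z P(z,\zeta')|\,dA(z)$.

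To match the ``mixed'' integrand in the theorem, I would use the logarithmic-mean identity $(A-B)(\log A-\log B)=L(A,B)(\log A-\log B)^2$ together with the elementary bound $L(A,B)\ge\sqrt{AB}$, applied to $A=|f^*(\zeta)|^2$ and $B=|f^*(\zeta')|^2$ (so that $\log A-\log B=2(u^*(\zeta)-u^*(\zeta'))$). This produces
\[
\bigl(u^*(\zeta)-u^*(\zeta')\bigr)^2\le\frac{\bigl(|f^*(\zeta)|^2-|f^*(\zeta')|^2\bigr)\bigl(u^*(\zeta)-u^*(\zeta')\bigr)}{2|f^*(\zeta)||f^*(\zeta')|}.
\]

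The main obstacle, and technical heart of the proof, is then the kernel estimate
\[
\tilde K(\zeta,\zeta')\le C\,|f^*(\zeta)||f^*(\zeta')|\,\frac{h(\zeta)+h(\zeta')}{|\zeta-\zeta'|^2}\qquad(\zeta,\zeta'\in\TT),
\]
which, combined with the two preceding inequalities, yields the theorem. I would attack it by a Whitney decomposition of $\DD$ indexed by boundary arcs. On each Whitney cell of arc-scale $|I|$, the factor $(1-|z|^2)^\alpha\asymp|I|^\alpha$ is, by the hypothesis $|I|^{-1}\int_I h\ge|I|^\alpha$, controlled by the $h$-average over $I$, transferring the weight to the boundary. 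The factor $|f(z)|^2$ is handled via the subharmonic bound $|f|^2\le P[|f^*|^2]$ together with the quasi-constancy of the outer $|f|$ on each Whitney cell (which is where the reduction to $1/f\in H^\infty$ helps), producing the $|f^*(\zeta)||f^*(\zeta')|$ factor. Finally, integrating $|\nabla_z P(z,\zeta)||\nabla_z P(z,\zeta')|$ against $(1-|z|^2)^\alpha$ over each Whitney region contributes the $|\zeta-\zeta'|^{-2}$ factor. Summing over the decomposition and removing the auxiliary $H^\infty$ assumption by approximation completes the argument.
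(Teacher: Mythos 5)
Your reduction to $f,1/f\in H^\infty$ and the symmetrized double-layer representation of $|\nabla u|^2$ are fine as far as they go, but the step you yourself identify as the technical heart — the pointwise kernel estimate
$$
\tilde K(\zeta,\zeta')\le C\,|f^*(\zeta)||f^*(\zeta')|\,\frac{h(\zeta)+h(\zeta')}{|\zeta-\zeta'|^{2}}
$$
— is false with any constant independent of $f$, and the theorem is useless unless the constant is independent of $f$ (it is applied later to outer functions whose boundary modulus vanishes on $E$). The integral defining $\tilde K$ picks up contributions from $z$ far from both $\zeta$ and $\zeta'$, where $|\nabla_zP(z,\zeta)||\nabla_zP(z,\zeta')|$ is bounded below and $|f(z)|$ has nothing to do with $|f^*(\zeta)|$ or $|f^*(\zeta')|$. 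Concretely, let $|f^*|\equiv1$ except on two tiny arcs around $\zeta$ and $\zeta'$ where $|f^*|=\epsilon$: then $|f(0)|\to1$, $|\nabla_zP(0,\zeta)|=2$, and the contribution of a neighbourhood of the origin to $\tilde K(\zeta,\zeta')$ stays bounded below as $\epsilon\to0$, while your right-hand side is $O(\epsilon^{2})$. The ``quasi-constancy of $|f|$ on Whitney cells'' produces averages of $|f^*|$ over the arc beneath each cell, not values at the two fixed points $\zeta,\zeta'$, so it cannot repair this. A second, independent defect: once you replace $-\nabla_zP(z,\zeta)\cdot\nabla_zP(z,\zeta')$ by $|\nabla_zP(z,\zeta)||\nabla_zP(z,\zeta')|$ you have discarded the cancellation that makes the representation integrable; since $|\nabla_zP(z,\zeta)|\asymp|\zeta-z|^{-2}$ as $z\to\zeta$, the integral $\int_\DD(1-|z|^2)^\alpha|\nabla_zP(z,\zeta)||\nabla_zP(z,\zeta')|\,dA(z)$ already diverges when $\alpha=0$, a case the theorem covers. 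Finally, even granting everything, your argument yields an unspecified constant $C$ rather than the stated $1/\pi$ (harmless for the applications, but not the statement as written).

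The paper's proof never decomposes $|\nabla u|^2$. The hypothesis on $h$ is used exactly once: comparing the Poisson kernel $P(z,\cdot)$ with the indicator of the arc of length $2(1-|z|^2)$ centred at $z/|z|$ gives $\tfrac12(1-|z|^2)^\alpha\le\int_\TT P(z,\zeta)h(\zeta)\,|d\zeta|$, so by Fubini $\cD_\alpha(f)\le2\int_\TT\cD_\zeta(f)h(\zeta)\,|d\zeta|$, where $\cD_\zeta(f)$ is the local Dirichlet integral. The Richter--Sundberg formula for $\cD_\zeta(f)$ of an outer function, followed by symmetrization in $\zeta$ and $\zeta'$ (which is what produces the product $(|f^*(\zeta)|^2-|f^*(\zeta')|^2)(\log|f^*(\zeta)|-\log|f^*(\zeta')|)$), then gives exactly the stated bound. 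That formula is the cancellation-preserving substitute for your kernel estimate, and is the missing ingredient if you want to salvage your approach.
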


\begin{proof}
Given  $z\in\DD$,  let  $I_z$ be the arc of $\TT$ 
with midpoint  $z/|z|$ and  
arclength $|I_z|=2(1-|z|^2)$. 
For  $\zeta\in I_z$, we have $|\zeta-z|\le 2(1-|z|^2)$,
and so 
$$
P(z,\zeta):=\frac{1-|z|^2}{|\zeta-z|^2}\ge \frac{1}{4(1-|z|^2)}=\frac{1}{2|I_z|}.
$$
Hence, using \eqref{E:hjensen}, we have
$$
\int_\TT P(z,\zeta) h(\zeta)\, |d\zeta|
\ge \frac{1}{2|I_z|}\int_{I_z} h (\zeta)\,|d\zeta|
\ge\frac{1}{2}{|I_z|^\alpha}
\ge\frac{1}{2}(1-|z|^2)^\alpha.
$$
Therefore, by Fubini's theorem,
$$ 
\cD_\alpha(f)
:=\frac{1}{\pi}\int_\DD |f'(z)|^2(1-|z|^2)^\alpha\,dA(z)
\le 2\int_\TT\cD_\zeta(f)h(\zeta)\,|d\zeta|,
$$
where 
$$
\cD_\zeta(f):=\frac{1}{\pi}\int_\DD|f'(z)|^2P(z,\zeta)\,dA(z) \quad(\zeta\in\TT).
$$
Now $\cD_\zeta(f)$ is the so-called local Dirichlet of integral of $f$ at $\zeta$,
which was studied in detail by Richter and Sundberg in \cite{RS}.
In particular, they showed that, if $f$ is an outer function, then
$$
\cD_\zeta(f)
=\frac{1}{2\pi}\int_{\TT}\frac{|f^*(\zeta)|^2-|f^*(\zeta')|^2 -2|f^*(\zeta')|^2\log|f^*(\zeta)/f^*(\zeta')|}{|\zeta-\zeta'|^{2}}\,|d\zeta'|.
$$
Substituting this into the preceding estimate for $\cD_\alpha(f)$,
and noting the obvious fact that $h(\zeta)\le h(\zeta)+h(\zeta')$,
we deduce that $\cD_\alpha(f)$ is majorized by
$$
\frac{1}{\pi}\iint_{\TT}\frac{|f^*(\zeta)|^2-|f^*(\zeta')|^2 -2|f^*(\zeta')|^2\log|f^*(\zeta)/f^*(\zeta')|}{|\zeta-\zeta'|^{2}}(h(\zeta)+h(\zeta'))\,|d\zeta'|\,|d\zeta|.
$$
Exchanging the roles of $\zeta$ and $\zeta'$, we see that $\cD_\alpha(f)$ is likewise majorized by
 $$
\frac{1}{\pi}\iint_{\TT}\frac{|f^*(\zeta')|^2-|f^*(\zeta)|^2 -2|f^*(\zeta)|^2\log|f^*(\zeta')/f^*(\zeta)|}{|\zeta'-\zeta|^{2}}(h(\zeta')+h(\zeta))\,|d\zeta|\,|d\zeta'|.
$$
Taking the average of these last two estimates,
we obtain the inequality in the statement of the theorem.
\end{proof}

We are going to apply this result with $h(\zeta):=Cd(\zeta,E)^\alpha$,
where $C$ is a constant, 
$d$ denotes arclength distance on $\TT$, and $E$ is a closed subset of $\TT$.
Condition \eqref{E:hjensen} thus becomes
\begin{equation}\label{E:Kset}
\frac{1}{|I|}\int_I d(\zeta,E)^\alpha\,|d\zeta|\ge C^{-1}|I|^\alpha 
\quad\text{for all arcs~}I\subset\TT.
\end{equation} 
A set $E$ which satisfies this condition for some $\alpha,C$ is called a \emph{K-set}
(after Kotochigov).
K-sets arise as the interpolation sets for certain function spaces, 
and have several other interesting properties. 
We refer to \cite[\S1]{Br} and \cite[\S3]{Dy} for more details. 
In particular, if $E$ satisfies \eqref{E:Kset}, 
then it has measure zero and $\log d(\zeta,E)\in L^1(\TT)$.

\begin{thm}\label{T:fw} 
Let $\alpha\in(0,1)$,
let $E$ be a closed subset of $\TT$ satisfying \eqref{E:Kset},
and let $w:[0,2\pi]\to \RR^+ $ be  an increasing function 
such that  $t\mapsto \omega(t^\gamma)$ is concave for some $\gamma>2/(1-\alpha)$. 
Let $f_w$ be the outer function satisfying
$$
|f_w^*(\zeta)|=w(d (\zeta, E))\qquad \text{a.e on } \TT.
$$
Then
\begin{equation}\label{E:fw}
\cD_\alpha(f_w)\le 
C(\alpha, \gamma,E) \int_{\TT}w'(d(\zeta,E))^2 d(\zeta,E)^{1+\alpha}\,|d\zeta|.
\end{equation}
In particular $f_w\in \cD_\alpha$ if the last integral is finite.
\end{thm}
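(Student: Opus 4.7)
The approach is to apply Theorem~\ref{T:Carleson} with a weight $h$ tailored to $E$, then exploit the concavity hypothesis on $w$ to estimate the resulting double integral.

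First, the K-set condition \eqref{E:Kset} for $E$ guarantees that the choice $h(\zeta) := C(E)\, d(\zeta,E)^\alpha$ satisfies \eqref{E:hjensen}. Applying Theorem~\ref{T:Carleson} to $f_w$ and writing $u := d(\zeta,E)$, $v := d(\zeta',E)$, I obtain
\[
\cD_\alpha(f_w) \le \frac{C(E)}{\pi}\iint_{\TT^2}\frac{(w(u)^2-w(v)^2)(\log w(u)-\log w(v))}{|\zeta-\zeta'|^2}(u^\alpha+v^\alpha)\,|d\zeta|\,|d\zeta'|.
\]
By the $\zeta \leftrightarrow \zeta'$ symmetry of the integrand, I restrict to $\{u \le v\}$ (doubling the estimate) and bound $u^\alpha + v^\alpha \le 2v^\alpha$.

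Second, set $W(r) := w(r^\gamma)$, which is concave, increasing, and nonnegative on its domain. Concavity of $W$ yields two monotonicity facts: $r \mapsto W(r)/r$ is nonincreasing (tangent-line inequality combined with $W \ge 0$), and $W'$ is nonincreasing. Translating via $s = r^\gamma$ gives, for $0 < u \le v$,
\[
\frac{w(v)}{w(u)} \le \left(\frac{v}{u}\right)^{1/\gamma}, \qquad s^{1-1/\gamma}w'(s)\ \text{is nonincreasing in } s.
\]
In particular $w'$ itself is nonincreasing (the product of the two nonincreasing nonnegative functions $s^{-(1-1/\gamma)}$ and $s^{1-1/\gamma}w'(s)$), so $w(v)-w(u) \le w'(u)(v-u)$. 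Combining these with the elementary bounds $w(v)^2-w(u)^2 \le 2 w(v)(w(v)-w(u))$ and $\log(w(v)/w(u)) \le (w(v)-w(u))/w(u)$, I arrive at the key pointwise estimate
\[
(w(u)^2-w(v)^2)(\log w(u)-\log w(v)) \le 2\, (v/u)^{1/\gamma}\, w'(u)^2 (v-u)^2 \qquad (u \le v).
\]

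Third, plug this back into the double integral. Factoring out $w'(u)^2$, it suffices to prove, for every $\zeta \in \TT$,
\[
\int_{\{\zeta' : v \ge u\}} \frac{(v/u)^{1/\gamma}(v-u)^2 v^\alpha}{|\zeta-\zeta'|^2}\,|d\zeta'| \le C(\alpha,\gamma,E)\, u^{1+\alpha}.
\]
I split the inner integral according to $r := |\zeta-\zeta'|$. In the near regime $r \le u$, the Lipschitz inequality $|v-u| \le r$ and the bound $v \le 2u$ reduce the integrand to $\lesssim u^\alpha$, and integrating over the arc of length $\asymp u$ produces $O(u^{1+\alpha})$. In the far regime $r > u$ one stratifies by the value of $\delta := v - u \in [0,r]$, using that the locus $\{\zeta' : d(\zeta',E) \ge u+\delta,\ |\zeta-\zeta'| \le r\}$ has measure controlled by the K-set condition on $E$; a dyadic decomposition in the variables $\delta$ and $r$ yields a convergent geometric series precisely when $\gamma > 2/(1-\alpha)$, again producing $O(u^{1+\alpha})$. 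Integrating in $\zeta \in \TT$ then delivers \eqref{E:fw}.

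The main obstacle is the far regime $r > u$: the naive bound $(v-u)^2/|\zeta-\zeta'|^2 \le 1$ is tight only where $v \approx u+r$, so one must exploit the detailed geometry of $d(\cdot,E)$ encoded in the K-set condition to absorb the extra growth factors $(v/u)^{1/\gamma}$ and $v^\alpha$. The threshold $\gamma > 2/(1-\alpha)$ appears to be sharp for this scheme, matching the critical exponent for integrability of the relevant model integrals.
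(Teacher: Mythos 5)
Your first step coincides with the paper's: the K-set condition justifies taking $h(\zeta)=C(E)\,d(\zeta,E)^\alpha$ in Theorem~\ref{T:Carleson}, and the symmetrization to $u\le v$ is harmless. The gap is in your ``key pointwise estimate''. Replacing the concavity-derived increment bound by $w(v)-w(u)\le w'(u)(v-u)$ throws away exactly the decay that makes the far-field integral converge. Concavity of $W(r)=w(r^\gamma)$ gives the much stronger
$$
w(v)-w(u)\;\le\;W'(u^{1/\gamma})\bigl(v^{1/\gamma}-u^{1/\gamma}\bigr)\;=\;\gamma\,u\,w'(u)\bigl((v/u)^{1/\gamma}-1\bigr),
$$
which for $v\gg u$ is of order $w'(u)\,u^{1-1/\gamma}v^{1/\gamma}$, whereas your bound is of order $w'(u)\,v$; squaring, you lose a factor $(v/u)^{2-2/\gamma}$. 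This is what the paper uses (it is the pair of inequalities for $w^2(t+s)-w^2(t)$ and $\log w(t+s)-\log w(t)$ in its proof), and it is precisely what produces the convergent model integral $\int^\infty x^{2/\gamma+\alpha-2}\log x\,dx$ under $\gamma>2/(1-\alpha)$. With your estimate the model integral becomes $\int^\infty x^{1/\gamma+\alpha}\,dx$, which diverges for every $\gamma$.

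Concretely, your displayed reduction
$$
\int_{\{v\ge u\}}\frac{(v/u)^{1/\gamma}(v-u)^2v^\alpha}{|\zeta-\zeta'|^2}\,|d\zeta'|\;\le\;C\,u^{1+\alpha}
$$
is false. Take $E=\{1\}$ (a K-set for every $\alpha\in(0,1)$) and $\zeta=e^{iu}$: parametrizing $\zeta'=e^{i\theta}$ with $\theta\in(u,\pi)$ gives $v=\theta$, $|\zeta-\zeta'|\asymp\theta-u$, and the left side is $\asymp u^{-1/\gamma}\int_u^\pi\theta^{1/\gamma+\alpha}\,d\theta\asymp u^{-1/\gamma}$, which is not $O(u^{1+\alpha})$. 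Your hope that the K-set condition controls the far regime is misplaced: \eqref{E:Kset} is a \emph{lower} bound on $\frac1{|I|}\int_I d(\cdot,E)^\alpha$, i.e.\ a thinness condition on $E$, and it gives no upper bound on the measure of $\{\zeta':d(\zeta',E)\ge u+\delta,\ |\zeta-\zeta'|\le r\}$ --- that set can be essentially the whole arc of radius $r$ (it is for $E=\{1\}$, and also for points $\zeta'$ deep inside the large complementary gaps of a generalized Cantor set). The only structural input the paper needs beyond \eqref{E:Kset} is the Lipschitz bound $v\le u+|\zeta-\zeta'|$ together with monotonicity of the integrand in $v$, which reduces everything to the one-dimensional integral in $s=v-u$ against the counting function $N_E(t)$; the convergence then rests entirely on the sharper concavity estimate. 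If you substitute that estimate for yours, your scheme becomes essentially the paper's proof.
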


\begin{proof}
The proof is largely similar to that of \cite[Theorem~4.1]{EKR1}, 
so we give just a sketch, 
concentrating on those parts where the two proofs differ.

We begin by remarking that the concavity condition on $w$ easily implies that
$|\log w(d(\zeta,E))|\le C(w)|\log d(\zeta,E)|$, so $\log w(d(\zeta,E))\in L^1(\TT)$
and the definition of $f_w$ makes sense. 

By Theorem~\ref{T:Carleson}, we have 
$$
\cD_\alpha(f_w)\le C(\alpha,E)\iint_{\TT^2}
 \frac{(w^2(\delta)-w^2(\delta'))(\log w(\delta)-\log w(\delta'))}{|\zeta-\zeta'|^2}
(\delta^\alpha+ \delta'^\alpha )\,|d\zeta|\,|d\zeta'|,
$$
where we have written $\delta:=d(\zeta,E)$ and $\delta':=d(\zeta',E)$.

Let $(I_j)$ be the connected components of $\TT\setminus E$, and set
$$
N_E(t):=2\sum_j 1_{\{|Ij|>2t\}} \quad(0<t\le\pi).
$$
Then, for every measurable function $\Omega:[0,\pi]\to\RR^+$, we have
$$
\int_\TT \Omega(d(\zeta,E))\,|d\zeta|=\int_0^\pi \Omega(t)N_E(t)\,dt.
$$
In particular, as in \cite{EKR1}, it follows that
\begin{align*}
&\iint_{\TT^2}\frac{(w^2(\delta)-w^2(\delta'))(\log w(\delta)-\log w(\delta'))}{|\zeta-\zeta'|^2}
(\delta^\alpha+ \delta'^\alpha )\,|d\zeta|\,|d\zeta'|\\
&\le C(\alpha)\int_0^\pi\int_0^\pi
 \frac{(w^2(s+t)-w^2(t))(\log w(s+t)-\log w(t))}{s^2}
(s+t )^\alpha N_E(t)\,ds\,dt.
\end{align*}
The concavity assumption on $w$ implies that $t\to t^{1-1/\gamma}w'(t)$ is decreasing,
and thus, as in \cite{EKR1}, 
\begin{align*}
w^2(t+s)-w^2(t)\le 2\gamma w(t+s)w'(t)t\bigl((1+s/t)^{1/\gamma}-1\bigr),\\
\log w(t+s)-\log w(t)\le tw'(t)\frac{(1+s/t)^{1/\gamma}}{w(t+s)}\log(1+s/t).
\end{align*}
Combining these estimates, we obtain
\begin{align*}
&\int_0^\pi  \int_0^\pi\frac{(w^2(t+s)-w^2(t))(\log w(t+s)-\log w(t))}{s^2}(s+t)^\alpha \,ds N_E(t)\,dt\\
&\le \int_0^\pi \int_0^\pi 2\gamma w'(t)^2t^{2+\alpha}\bigl((1+s/t)^{1/\gamma}-1\bigr)(1+s/t)^{1/\gamma+\alpha}\log(1+s/t)\,\frac{ds}{s^2}N_E(t)\,dt\\
&= \int_0^\pi 2\gamma w'(t)^2t^{1+\alpha}\bigl( \int_0^{\pi/t} 2\gamma \bigl((1+x)^{1/\gamma}-1\bigr)(1+x)^{1/\gamma+\alpha}\log (1+x)\frac{dx}{x^2}\bigr)N_E(t)\,dt\\
&\le C(\alpha,\gamma) \int_0^\pi w'(t)^2t^{1+\alpha}N_E(t)\,dt.
\end{align*}
 In the last inequality we used the fact that $\gamma>2/(1-\alpha)$.
\end{proof}

\section{Generalized Cantor sets}\label{S:Cantor}

The notion of the generalized Cantor set $E$ associated to a sequence $(a_n)$ 
was defined in \S\ref{S:intro}. 
In this section we briefly describe some pertinent properties of these sets. 
We shall write
$$
\lambda_E:=\sup_{n\ge0}\frac{a_{n+1}}{a_n}.
$$
Recall that, by hypothesis, $\lambda_E<1/2$. 

Our first result shows that generalized Cantor sets satisfy \eqref{E:hjensen},
and hence that Theorem~\ref{T:fw} is  applicable to such sets.

\begin{prop} Let $E$ be a generalized Cantor set and let $\alpha\in[0,1)$.
Then, for each arc $I\subset\TT$,
$$
\frac{1}{|I|}\int_I d(\zeta,E)^\alpha\,|d\zeta|\ge C(\alpha,\lambda_E)|I|^\alpha.
$$
\end{prop}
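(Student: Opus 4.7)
The plan is to exhibit, for each arc $I\subset\TT$, a sub-arc $J\subset I$ with $|J|\ge c|I|$ on which $d(\zeta,E)\ge c|I|$, where $c>0$ depends only on $\lambda_E$. From such a $J$ one reads off
$$\int_I d(\zeta,E)^\alpha\,|d\zeta|\ge |J|\,(c|I|)^\alpha\ge c^{1+\alpha}|I|^{1+\alpha},$$
which is the claimed inequality after division by $|I|$. Set $\eta:=1-2\lambda_E$, strictly positive by hypothesis; this guarantees that the gap created at generation $k$ has length $a_{k-1}-2a_k\ge\eta a_{k-1}$, comparable to the surrounding arcs of $E_{k-1}$.

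To locate $J$, I would first pass to the correct scale by taking $m$ to be the largest integer for which $I$ is contained in a single arc $A$ of $E_m$; this $m$ is finite because $a_n\to0$, and $|I|\le a_m$. (The boundary case where $I$ is too large to sit inside $E_0$ is dispatched by a direct argument, since $E$ has diameter at most $a_0$.) At the following generation $A$ splits as $A=A_1\cup G\cup A_2$ with $|A_j|=a_{m+1}$ and $|G|\ge\eta a_m$, and by maximality of $m$ the arc $I$ lies in neither $A_1$ nor $A_2$, so $I\cap G\neq\emptyset$. If $I\subset G$, then $I$ sits in a single component of $\TT\setminus E$ and a direct one-variable computation on the tent function $d(\cdot,\partial G)$ yields $\int_I d(\zeta,E)^\alpha|d\zeta|\ge(1+\alpha)^{-1}2^{-\alpha}|I|^{1+\alpha}$. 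If $I\supset G$, then $I$ contains a gap of length $\ge\eta|I|$ whose midpoint is at distance $\ge\eta|I|/2$ from $E$, and a neighborhood of that midpoint serves as $J$.

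The main obstacle is the straddle case, in which $I$ has exactly one endpoint $p$ of $G$ in its interior. Write $I=J_A\cup J_G$ with $J_A=I\cap A_1$ (say) and $J_G=I\cap G$ meeting at $p$. If $|J_G|\ge|I|/2$, then the far endpoint of $J_G$ is at distance $\ge\min(|I|/2,|G|/2)\ge c(\lambda_E)|I|$ from $E$ and we are done. The remaining sub-case $|J_A|>|I|/2$ I would treat by one level of recursion inside $A_1$, with $J_A$ in place of $I$. Here is where the hypothesis $2\lambda_E<1$ is used decisively: because $p$ is an endpoint of $A_1$, and therefore of whichever arc $A^{(1)}$ of $E_{n(J_A)}$ contains $J_A$, maximality of $n(J_A)$ forces $J_A$ to contain the entire outer sub-arc $A^{(1)}_1$ at the following generation and to reach into the adjacent gap $G^{(1)}$. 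A trichotomy---either $J_A$ reaches all the way to $A^{(1)}_2$ (so $J_A\supset G^{(1)}$), or $J_A\cap G^{(1)}$ has length $\ge|J_A|/2$, or $|A^{(1)}_1|>|J_A|/2>|I|/4$---produces a usable $J$ in every case: in the last alternative, the middle gap of $A^{(1)}_1$ at the next generation has length $\ge\eta|A^{(1)}_1|\ge c(\lambda_E)|I|$ and lies inside $A^{(1)}_1\subset J_A\subset I$. Thus the recursion terminates after a single extra step, and all constants depend only on $\lambda_E$ and $\alpha$.
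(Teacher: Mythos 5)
Your proof is essentially correct and rests on the same key geometric fact as the paper's --- every arc $I$ contains an arc of $\TT\setminus E$ of length at least $c(\lambda_E)|I|$, after which one integrates $d(\cdot,E)^\alpha$ over (part of) that arc --- but you reach that fact by a genuinely different and much longer route. The paper selects the scale by \emph{length}, taking $n$ with $2a_n<|I|\le 2a_{n-1}$; then a two-case dichotomy suffices: either $I$ meets at least two of the arcs of $E_n$, hence contains a whole gap, of length at least $a_{n-1}-2a_n\ge(1/2-\lambda_E)|I|$, or it meets at most one, and $I\setminus E_n$ contains an arc of length at least $(|I|-a_n)/2\ge|I|/4$. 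You select the scale by \emph{containment} (the largest $m$ with $I$ inside a single arc of $E_m$), which permits $|I|$ to be far smaller than $a_m$ when $I$ hugs an endpoint of $A$; this is exactly what creates your straddle case and forces the one-step recursion, all of which the length-based choice of $n$ avoids at the outset. Two repairable slips in your version: in the sub-case $|J_G|\ge|I|/2$, the far endpoint of $J_G$ is at distance $\min(|J_G|,\,|G|-|J_G|)$ from $E$, which can be arbitrarily small when $J_G$ nearly exhausts $G$, so you should use the point of $J_G$ at distance $|J_G|/2$ from $p$ (or just integrate the tent function over $J_G$ as in your first case); and the generation-$0$ boundary case is about position rather than size, since a short arc $I$ can fail to lie in $E_0$ by straddling the complementary arc $\TT\setminus E_0$, though the same gap analysis disposes of it.
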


\begin{proof} 
Let $I$ be an arc with $|I|\le 2a_0$,
and choose $n$ so that $2a_n< |I|\le 2a_{n-1}$.
Recall that the $n$-th approximation to $E$ consists of $2^n$ arcs,
each of length~$a_n$, and that the distance between these arcs is at least $a_{n-1}-2a_n$.
If $I$ meets at least two of these arcs, 
then $I\setminus E$ contains an arc $J$ of length $a_{n-1}-2a_n$, 
and if $I$ meets at most one of these arcs, 
then $I\setminus E$ contains an arc $J$ of length $(|I|-a_n)/2$. 
Thus $I\setminus E$ always contains an arc $J$ such that
$|J|/|I|\ge \min\{1/2-\lambda_E,\,1/4\}$. Consequently
$$
\frac{1}{|I|}\int_Id(\zeta,E)^\alpha\,|d\zeta|
\ge\frac{1}{|I|}\int_Jd(\zeta,E)^\alpha\,|d\zeta|
\ge\frac{1}{|I|}\frac{|J|^{\alpha+1}}{\alpha+1}
\ge C(\alpha,\lambda_E)|I|^\alpha.
$$
\end{proof}

In the next two results, we write 
$E_t:=\{\zeta\in\TT:d(\zeta,E)\le t\}$.

\begin{prop}\label{P:|Et|}
If $E$ is a generalized Cantor set,
then  $|E_t|=O(t^\mu)$ as $t\to0$, where $\mu:=1-\log2/\log(1/\lambda_E)$.
\end{prop}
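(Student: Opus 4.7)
The plan is to exploit the fact that the $n$-th stage $E_n$ of the construction is a fairly economical cover of $E$ by $2^n$ arcs of known length, so that the area of any $t$-neighborhood of $E$ can be controlled by the $t$-neighborhood of the best $E_n$. More precisely, since $E\subset E_n$ for every $n$, we automatically have
\[
E_t \subset \bigcup_{j=1}^{2^n}\widetilde{I}_j^{(n)},
\]
where $\widetilde{I}_j^{(n)}$ is the arc obtained by enlarging the $j$-th component arc of $E_n$ by $t$ on each side. Hence
\[
|E_t|\le 2^n(a_n+2t).
\]

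The second step is to pick $n$ as a function of $t$ so that $a_n$ and $t$ are comparable. Concretely, for small $t>0$ let $n=n(t)$ be the smallest integer with $a_n\le t$; then $a_{n-1}>t$. From the hypothesis $\sup_k a_{k+1}/a_k\le\lambda_E$ we get $a_{n-1}\le\lambda_E^{n-1}a_0$, and so
\[
t<\lambda_E^{n-1}a_0,
\]
which rearranges to $2^{n-1}\le (a_0/t)^{\log 2/\log(1/\lambda_E)}$. Substituting this back, together with $a_n\le t$, into the bound $|E_t|\le 2^n(a_n+2t)\le 3\cdot 2^n t$, gives
\[
|E_t|\le 6\,a_0^{\log 2/\log(1/\lambda_E)}\,t^{1-\log 2/\log(1/\lambda_E)}=C(\lambda_E,a_0)\,t^{\mu},
\]
which is exactly the stated estimate.

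There isn't really a hard step here; the whole argument is a one-parameter optimization of a trivial covering bound. The only thing to be careful about is the case of very large $t$ (say $t\ge a_0$), where $E_t$ is already essentially all of $\TT$ and the asymptotic bound is only claimed as $t\to 0$, so no extra work is needed. The condition $\lambda_E<1/2$ is used only to ensure that $\mu>0$, so that the resulting exponent is genuinely meaningful.
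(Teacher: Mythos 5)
Your argument is correct and is essentially identical to the paper's proof: both cover $E_t$ by the $t$-enlargements of the $2^n$ arcs of $E_n$ to get $|E_t|\le 2^n(a_n+2t)\le 3\cdot 2^nt$, choose $n$ so that $a_n\le t<a_{n-1}$, and bound $2^{n-1}$ by $(a_0/t)^{\log 2/\log(1/\lambda_E)}$ using $a_{n-1}\le\lambda_E^{n-1}a_0$. No meaningful differences.
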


\begin{proof}
Given $t\in(0,a_0]$, choose $n$ so that $a_n<t\le a_{n-1}$.
Then clearly $|E_t|\le 2^n(a_n+2t)\le 3.2^n t$.
Also $2^{n-1}=(1/\lambda_E^{n-1})^{\log2/\log(1/\lambda_E)}\le 
(a_0/t)^{\log2/\log(1/\lambda_E)}$. 
Hence $|E_t|\le C(a_0,\lambda_E)t^{1-\log2/\log(1/\lambda_E)}$.
\end{proof}

In particular, every generalized Cantor set $E$ is a 
\emph{Carleson set}, that is,   $\int_0^\pi( |E_t|/t)\,dt<\infty$.
Taylor and Williams \cite{TW} showed that
 Carleson sets are  zero sets of outer functions in $A^\infty(\DD)$. 
 This justifies a remark made in \S\ref{S:intro}.

The final property that we need concerns the $\alpha$-capacity, $C_\alpha$, 
which was defined in \S\ref{S:intro}.

\begin{thm}\label{T:capmeas}
Let $E$ be a generalized Cantor set and let $\alpha\in[0,1)$.
Then 
$$
C_\alpha(E)=0 \iff \int_0^\pi \frac{dt}{t^\alpha |E_t|}=\infty.
$$
\end{thm}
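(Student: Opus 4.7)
The plan is to prove both implications by comparing the integral $\int_0^\pi dt/(t^\alpha|E_t|)$ with the $\alpha$-energy of the natural Cantor measure $\mu_E$, which assigns mass $2^{-n}$ to each of the $2^n$ level-$n$ arcs.

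For the direction $\int_0^\pi dt/(t^\alpha|E_t|)=\infty \Rightarrow C_\alpha(E)=0$, I would prove the stronger assertion $I_\alpha(\nu)\gtrsim \int_0^\pi dt/(t^\alpha|E_t|)$ for every $\nu\in\mathcal{P}(E)$ (in fact valid for any compact set in place of $E$). Given $t>0$, partition $\TT$ into consecutive arcs $B_1,\dots,B_{N_t}$ of length $t$; the ones meeting $\mathrm{supp}(\nu)$ all lie in $E_t$ (since $\mathrm{supp}(\nu)\subset E$), so $N_t\le|E_t|/t$. Cauchy--Schwarz then gives $\sum_j\nu(B_j)^2\ge 1/N_t\ge t/|E_t|$, and two points of the same $B_j$ satisfy $|\zeta-\zeta'|\le t$, yielding
$$
(\nu\times\nu)\{|\zeta-\zeta'|\le t\}\;\ge\;\frac{t}{|E_t|}.
$$
Substituting this into the layer-cake identity $I_\alpha(\nu) = \alpha\int_0^\infty t^{-\alpha-1}(\nu\times\nu)\{|\zeta-\zeta'|\le t\}\,dt$ (with the analogous formula involving $1/t$ in the case $\alpha=0$) forces $I_\alpha(\nu)=\infty$, whence $C_\alpha(E)=0$.

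For the converse, I would show that $\mu_E$ itself has finite $\alpha$-energy when the integral converges. Classify pairs $(\zeta,\zeta')\in E\times E$ by $n(\zeta,\zeta')$, the largest $n$ for which $\zeta,\zeta'$ lie in a common level-$n$ arc. On $\{n(\zeta,\zeta')=n\}$ one has $|\zeta-\zeta'|\in[a_n-2a_{n+1},a_n]\asymp a_n$ (using $\lambda_E<1/2$), while a direct count gives $(\mu_E\times\mu_E)\{n(\zeta,\zeta')=n\}=2^{-(n+1)}$. Therefore $I_\alpha(\mu_E)\asymp\sum_{n\ge 0}2^{-n}k_\alpha(a_n)$. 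Splitting $\int_0^\pi dt/(t^\alpha|E_t|)=\sum_n\int_{a_n}^{a_{n-1}}\!dt/(t^\alpha|E_t|)$ and using the two-sided bound $|E_t|\asymp 2^n t$ on $(a_n,a_{n-1}]$ produces the same series --- directly for $\alpha\in(0,1)$, and via an Abel summation identifying $\sum 2^{-n}\log(a_{n-1}/a_n)$ with $\sum 2^{-n}\log(1/a_n)$ modulo an additive constant when $\alpha=0$. Thus $I_\alpha(\mu_E)<\infty$, so $C_\alpha(E)>0$.

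The main technical obstacle is the two-sided estimate $|E_t|\asymp 2^n t$ on $(a_n,a_{n-1}]$, with constants depending only on $\lambda_E$. The upper bound $|E_t|\le 2^n(a_n+2t)\le 3\cdot 2^n t$ is immediate, as in the proof of Proposition~\ref{P:|Et|}. The lower bound is subtler because $t$-neighborhoods of adjacent level-$n$ arcs can merge when $2t$ exceeds the minimum level-$n$ gap $a_{n-1}-2a_n$. I would derive it via the standard relation $|E_t|\asymp t\,N(E,t)$, where $N(E,t)$ is the minimal number of arcs of length $t$ needed to cover $E$, combined with the bound $N(E,t)\asymp 2^n$ for $t\in(a_n,a_{n-1}]$ (the upper bound coming from the obvious covering by the $2^n$ level-$n$ arcs, the lower bound from the observation that no arc of length $t\le a_{n-1}$ can cover both endpoints of any level-$(n-1)$ arc).
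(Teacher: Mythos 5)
Your argument is correct, but it takes a genuinely different route from the paper: the paper's entire proof is a citation of Carleson's book (\S IV, Theorems 2 and 3 of \cite{Ca2}), which already contain the equivalence $C_\alpha(E)>0\iff\sum_n 2^{-n}k_\alpha(a_n)<\infty$ for generalized Cantor sets together with the general lower bound on energies; all that remains for the authors is the bookkeeping identification of $\int_0^\pi dt/(t^\alpha|E_t|)$ with that series via $|E_t|\asymp 2^n t$ on $(a_n,a_{n-1}]$. You instead reprove everything from scratch: the forward implication via the pigeonhole/Cauchy--Schwarz estimate $(\nu\times\nu)\{|\zeta-\zeta'|\le t\}\ge t/|E_t|$ plugged into the layer-cake representation of $I_\alpha$ (valid for \emph{any} compact set, not just Cantor sets --- this is exactly the content of Carleson's general comparison theorem), and the converse by computing the energy of the natural measure $\mu_E$ through the decomposition by the last common generation $n(\zeta,\zeta')$, which correctly yields $(\mu_E\times\mu_E)\{n(\zeta,\zeta')=n\}=2^{-(n+1)}$ and $I_\alpha(\mu_E)\asymp\sum_n 2^{-n}k_\alpha(a_n)$. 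What your approach buys is self-containedness and transparency about where the Cantor structure is actually used (only in the converse); what the paper's approach buys is brevity. Two small points deserve care in a write-up: (a) for $\alpha=0$ the kernel $\log(1/t)$ is not positive, so both the layer-cake identity and the estimate $k_0(|\zeta-\zeta'|)\asymp\log(1/a_n)$ hold only up to harmless additive constants, as you note; (b) the lower bound $|E_t|\gtrsim tN(E,t)$ should be run through a maximal $t$-separated subset of $E$ (disjoint half-neighborhoods give $|E_t|\ge tP$ and $N(E,Ct)\le P$), and the lower bound $N(E,t)\ge 2^{n-1}$ follows from your endpoint observation because among any three consecutive endpoints of level-$(n-1)$ arcs two belong to the same arc, so a covering arc of length $t<a_{n-1}$ meets at most two of the $2^n$ endpoints; the resulting dilation of $t$ by a bounded factor is absorbed by a change of variables in the integral.
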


\begin{proof}
This follows easily from \cite[\S IV, Theorems 2 and 3]{Ca2}.
\end{proof}

\section{Regularization}\label{S:reg}
 
We shall need the following regularization result. The proof is the same, with minor modifications, as that of  \cite[Theorem 5.1]{EKR1}.
 
\begin{thm}\label{T:reg} 
Let $\alpha\in[0,1)$, let $\sigma\in(0,1)$ and let $a>0$.
Let  $\phi:(0,a]\to \RR^+$ be a function such that
\begin{itemize}
\item $\phi(t)/t$ is decreasing,
\item $0<\phi(t)\le t^\sigma$ for all $t\in (0,a]$,
\item $\displaystyle \int_0^a \frac{dt}{t^\alpha\phi(t)}=\infty$.
\end{itemize}
Then, given  $\rho\in(0,\sigma)$, there exists a function $\psi:(0,a]\to \RR^+ $ such that
\begin{itemize}
\item $\psi(t)/t^\rho$ is increasing,
\item $\phi(t)\le \psi(t)\le t^\sigma$ for all $t\in (0,a]$,
\item  $\displaystyle\int_{0}^{a} \frac{dt}{t^\alpha \psi(t)}=\infty$.
\end{itemize}
\end{thm}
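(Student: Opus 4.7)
The natural candidate for $\psi$ is the smallest function meeting the first two bullets, namely $\psi_0(t):=t^\rho\sup_{0<s\le t}\phi(s)/s^\rho$. One verifies at once that $\psi_0/t^\rho$ is non-decreasing, $\psi_0\ge\phi$, and (using $\phi(s)\le s^\sigma$ together with $\rho<\sigma$) $\psi_0(t)\le t^\sigma$. So the feasibility of the first two bullets is automatic; the entire content of the theorem is that the divergent integral can be preserved.

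To arrange this, I would follow the template of \cite[Theorem~5.1]{EKR1} and discretize on a geometric sequence. Pick $a=t_0>t_1>t_2>\cdots\downarrow 0$ greedily so that each interval $(t_{n+1},t_n]$ carries a fixed amount of the divergent integral, say
\[
\int_{t_{n+1}}^{t_n}\frac{dt}{t^\alpha\phi(t)}\ge 1,
\]
which is possible since the total integral is infinite. On each piece $(t_{n+1},t_n]$ I would then replace $\psi_0$ by a regularization of the form $\psi(t)=c_n t^\rho$, with $c_n$ chosen simultaneously to sit above $\sup_{(t_{n+1},t_n]}\phi(s)/s^\rho$ (so that $\psi\ge\phi$), below $t^{\sigma-\rho}$ throughout the piece (so that $\psi\le t^\sigma$), and non-increasing in $n$ (so that $\psi/t^\rho$ is globally non-decreasing). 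The feasibility of these three demands rests on the one-sided estimate
\[
\phi(s)/s^\rho\ge (\phi(t)/t^\rho)(s/t)^{1-\rho}\qquad(s\le t),
\]
which is nothing but a rewriting of the hypothesis that $\phi(t)/t$ is decreasing; it controls the oscillation of $\phi/s^\rho$ on a geometric interval, provided the dyadic ratio $t_n/t_{n+1}$ is tuned appropriately in terms of $\rho,\sigma$.

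The closing step is an interval-by-interval comparison: the same monotonicity yields a constant $K=K(\rho,\sigma)$ with $\psi\le K\phi$ on each $(t_{n+1},t_n]$, whence
\[
\int_{t_{n+1}}^{t_n}\frac{dt}{t^\alpha\psi(t)}\ge K^{-1}\int_{t_{n+1}}^{t_n}\frac{dt}{t^\alpha\phi(t)}\ge K^{-1},
\]
and summing over $n$ gives $\int_0^a dt/(t^\alpha\psi(t))=\infty$. I expect the main obstacle to be precisely this uniform piecewise comparability $\psi\le K\phi$: the divergence hypothesis can force $\phi$ to be arbitrarily far below $t^\sigma$ in places, so a careless choice of $c_n$ could leave $\phi$ far under $\psi$, and arranging that $K$ depends only on $\rho$ and $\sigma$ (rather than on $\phi$ itself) is what imposes the careful tuning of the dyadic parameters. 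Once this is in place, the extra weight $t^\alpha$ is absorbed into the dyadic bookkeeping with only cosmetic changes relative to the $\alpha=0$ argument in \cite{EKR1}.
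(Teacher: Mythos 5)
Your setup is sensible, and framing the problem as ``the first two bullets are automatic, the whole content is preserving the divergence'' is correct. But the step on which everything rests --- the uniform piecewise comparison $\psi\le K(\rho,\sigma)\,\phi$ on each greedy interval --- is not merely ``the main obstacle'': it is false, and the one-sided estimate you quote cannot yield it. For a constant $c_n\ge\sup_{I_n}\phi(s)/s^\rho$ to satisfy $c_ns^\rho\le K\phi(s)$ throughout $I_n=(t_{n+1},t_n]$ you need the two-sided bound $\sup_{I_n}\phi(s)/s^\rho\le K\inf_{I_n}\phi(s)/s^\rho$. Writing $\phi(s)/s^\rho=(\phi(s)/s)\,s^{1-\rho}$, the hypothesis that $\phi(t)/t$ is decreasing controls this quantity only from below as $s$ decreases (that is exactly your displayed inequality); it places no cap on how far $\phi(s)/s^\rho$ can rise within the interval. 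Concretely, $\phi(s)=\epsilon s$ for $s\ge b$ and $\phi(s)=s^\sigma$ for $s<b$ satisfies all the hypotheses, and on any interval straddling $b$ the ratio of $\sup_I\phi(s)/s^\rho$ to $\inf_I\phi(s)/s^\rho$ is at least $b^{\sigma-1}/\epsilon$, arbitrarily large and independent of $t_n/t_{n+1}$. Inserting infinitely many such jumps into a $\phi$ with divergent integral costs nothing (they occupy negligible measure) but destroys the comparison on infinitely many of your intervals, so no tuning of the dyadic parameters rescues the chain $\int_{I_n}dt/(t^\alpha\psi)\ge K^{-1}\int_{I_n}dt/(t^\alpha\phi)\ge K^{-1}$. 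There is also a smaller inconsistency: $c_nt^\rho\le t^\sigma$ on $I_n$ forces $c_n\le t_{n+1}^{\sigma-\rho}$, while $c_n\ge\sup_{I_n}\phi(s)/s^\rho$ can be as large as $t_n^{\sigma-\rho}>t_{n+1}^{\sigma-\rho}$ when $\phi$ touches $t^\sigma$ near $t_n$, so a pure piecewise power need not even satisfy the first two bullets.

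The missing idea is that the theorem cannot be reached through a pointwise comparison $\psi\le K\phi$ at all. Any admissible $\psi$ must lie above the running supremum $t^\rho\sup_{s\le t}\phi(s)/s^\rho$, and on the intervals $(c,d)$ where that majorant detaches from $\phi$ the hypothesis only gives $\phi(t)\ge\phi(d)t/d$, so $\int_c^d dt/(t^\alpha\phi)$ can exceed $\int_c^d dt/(t^\alpha\psi)$ by an unbounded factor (roughly $(d/c)^\alpha$). What makes the theorem true is the global constraint $\psi\le t^\sigma$ with $\rho<\sigma$, which forces the level of the majorant to tend to zero and thereby limits how much of the divergence these detachment intervals can swallow; the argument has to control the contact set as a whole rather than proceed interval by interval. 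Your proposal correctly locates the difficulty but leaves precisely that point unproved, and in the form stated it is unprovable.
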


\section{Completion of the proof of Theorem~\ref{T:suff}}\label{S:completion}
For $\alpha=0$, this theorem was proved in \cite[\S6]{EKR1}.
The proof for $\alpha\in(0,1)$ will
follow the same general lines,
and once again we shall concentrate mainly  on the places where the proofs differ.

Let $f$ be the function in  the theorem. 
Our goal is to show that $1\in[f]_{\cD_\alpha}$.

Let $E$ be as in the theorem, and let $g$ be the outer function such that
$$
|g^*(\zeta)|=d(\zeta,E)^4 \quad \text{a.e.\ on~}\TT.
$$
Then $|g(z)|\le (\pi/2)^4\dist(z,E)^4~(z\in\DD)$: 
indeed, for every $\zeta_0\in E$, we have
$$
\log|g(z)|
\le \frac{1}{2\pi}\int_\TT\frac{1-|z|^2}{|z-\zeta|^2}4\log\bigl((\pi/2)|\zeta-\zeta_0|\bigr)\,|d\zeta|
=4\log\bigl((\pi/2)|z-\zeta_0|\bigr).
$$
By assumption, the zero set  $F:=\{\zeta\in\TT:|f(\zeta)|=0\}$ in contained in $E$,
so $|g(z)|\le (\pi/2)^4\dist(z,F)^4~(z\in\DD)$.
Theorem~\ref{T:Korenblum} therefore applies,  
and we can infer that  $g\in[f]_{\cD_\alpha}$.
It thus suffices to prove that $1\in [g]_{\cD_\alpha}$.  

We shall construct a family  of functions
$w_\delta:[0,\pi]\to\RR^+$ for $0<\delta<1$ such that 
the associated  outer functions $f_{w_\delta}$ belong to 
$[g]_{\cD_\alpha}$ and satisfy:
\begin{itemize}
\item[(i)] $|f_{w_\delta}^*|\to 1$ a.e.\ on $\TT$ as $\delta\to0$,
\item[(ii)] $|f_{w_\delta}(0)|\to 1$ as $\delta\to0$,
\item[(iii)] $\liminf_{\delta\to0}\|f_{w_\delta}\|_\alpha<\infty$.
\end{itemize}
If such a family exists, then a subsequence of the $f_{w_\delta}$ converges weakly to $1$ in $\cD_\alpha$, and since they all belong to $[g]_{\cD_\alpha}$, it follows that $1\in [g]_{\cD_\alpha}$, as desired.

By Proposition~\ref{P:|Et|}, there exists $\mu>0$ such that $|E_t|=O(t^\mu)$ as $t\to0$.
Fix $\rho,\sigma$ satisfying
$$
\frac{1-\alpha}{2}<\rho<\sigma<\min\Bigl\{1-\alpha,~\frac{1-\alpha+\mu}{2}\Bigr\}.
$$
Define $\phi:(0,\pi]\to\RR^+$ by
$$
\phi(t):=\max\Bigl\{\min\{|E_t|,~t^\sigma\},~t^{1-\alpha}\Bigr\} \quad(t\in(0,\pi]).
$$
Clearly $\phi(t)/t$ is increasing and $0\le \phi(t)\le t^\sigma$ for all $t$.
We claim also that
\begin{equation}\label{E:claim}
\int_0^\pi \frac{dt}{t^\alpha \phi(t)}=\infty.
\end{equation}
To see this, note that
$$
\int_t^\pi \frac{ds}{s^\alpha|E_s|}\ge 
\frac{t}{|E_t|}\int_t^\pi\frac{ds}{s^{\alpha+1}}
=C(\alpha)\frac{t^{1-\alpha}}{|E_t|},
$$
whence
\begin{align*}
\int_\epsilon^\pi \frac{dt}{t^\alpha\phi(t)}
&\ge \int_\epsilon^\pi \frac{dt}{\max\{t,t^\alpha|E_t|\}}\\
&\ge C(\alpha)\int_\epsilon^\pi \frac{ds}{t^\alpha|E_t|(\int_t^\pi ds/s^\alpha|E_s|)}\\
&\ge C(\alpha)\log \int_\epsilon^\pi \frac{dt}{t^\alpha|E_t|}.
\end{align*}
Since $E$ is a generalized Cantor set of $\alpha$-capacity zero, 
Theorem~\ref{T:capmeas} shows that
$$
\int_0^\pi \frac{dt}{t^\alpha|E_t|}=\infty.
$$
Consequently \eqref{E:claim} holds, as claimed.

We have now shown that $\phi$ satisfies all the hypotheses of the regularization theorem,
Theorem~\ref{T:reg}. Therefore there exists a function $\psi:(0,\pi]\to\RR^+$ satisfying the conclusions of that theorem, namely: $\psi(t)/t^\rho$ is increasing, $t^{1-\alpha}\le\phi(t)\le \psi(t)\le t^\sigma$ for all $t$, and $\int_0^\pi dt/t^\alpha\psi(t)=\infty$.

For $0<\delta<1$, we define $w_\delta:[0,\pi]\to\RR^+$ by
$$
w_\delta(t):=
\begin{cases}
\displaystyle \frac{\delta^\rho}{\psi(\delta)}t^{1-\alpha-\rho} &0\le t\le\delta\\
\displaystyle A_\delta-\log\int_t^\pi\frac{ds}{s^\alpha\psi(s)} &\delta<t\le\eta_\delta\\
\displaystyle1 &\eta_\delta<t\le\pi,
\end{cases}
$$
where $A_\delta$ and $\eta_\delta$ are constants chosen to make $w_\delta$ continuous.

Let us show that $f_{w_\delta}\in[g]_{\cD_\alpha}$. Note first that $w_\delta(t)/t^{1-\alpha-\rho}$ is a bounded function. Therefore $f_{w_\delta}/g^{(1-\alpha-\rho)/4}$ is bounded. Using Theorem~\ref{T:fw}, we have $g^{(1-\alpha-\rho)/4}\in\cD_\alpha$. Consequently, by a theorem of Aleman \cite[Lemma~3.1]{Al}, $f_{w_\delta}\in[g^{(1-\alpha-\rho)/4}]_{\cD_\alpha}$. Using another result of Aleman \cite[Theorem~2.1]{Al}, we have
$g^{(1-\alpha-\rho)/4}\in[g]_{\cD_\alpha}$. Hence $f_{w_\delta}\in[g]_{\cD_\alpha}$, as claimed.

It remains to check that the functions $f_{w_\delta}$ satisfy properties (i)--(iii) above.
The verifications run along   the same lines as  those in \cite[\S6]{EKR1}, 
using the properties of $\psi$ above, and  Theorem~\ref{T:fw} in place of \cite[Theorem~4.1]{EKR1}.

\end{document}